\newskip\stdskip                      
\newcommand{\C}{\mathbb C}
\newcommand{\R}{\mathbb R}
\newcommand{\Z}{\mathbb Z} 
\newcommand{\N}{\mathbb N}
\newcommand{\PP}{\mathbb P}
\newcommand{\op}{\operatorname}
\newtheorem{thm}{Theorem}[section]
\newtheorem{lemma}[thm]{Lemma}
\newtheorem{cor}[thm]{Corollary}
\newtheorem{prop}[thm]{Proposition}
\theoremstyle{definition}
\newtheorem{dfn}[thm]{Definition}
\newtheorem{rem}[thm]{Remark}
\begin{document}

\title[Holomorphic spheres and $4$-dimensional symplectic pairs]{Holomorphic spheres and four-dimensional symplectic pairs}
\author{Gianluca Bande}
\address{Dipartimento di Matematica e Informatica, Universit\`a degli Studi di Cagliari, Cagliari, Italy}
\email{gbande@unica.it}
\author{Paolo Ghiggini}
\address{Laboratoire Jean Leray, CNRS and Universit\'e de Nantes, Nantes, France}
\email{paolo.ghiggini@univ-nantes.fr}
\subjclass[2010]{57R17 primary; 57R30 secondary}

\thanks{The first author is partially supported by P.R.I.N. 2015 --  Real and Complex Manifolds: Geometry, Topology and Harmonic Analysis  -- Italy and by GESTA - Fondazione di Sardegna - Regione Sardegna. The second author is partially supported by the ANR grant ANR- 16-CE40-017 ``Quantact''.}

\date{\today}
\keywords{Symplectic pairs, $J$-holomorphic curves, $J$-holomorphic foliations}

\maketitle
\begin{abstract}
We classify four-dimensional manifolds endowed with symplectic pairs admitting embedded 
symplectic spheres with non-negative self-intersection, following the strategy of McDuff's 
classification  of rational and ruled symplectic four manifolds.
\end{abstract}

\section{Introduction}
A symplectic pair of type $(k, n-k)$ on a smooth manifold $M$ of dimension $2n$ \cite{BGK, BK, KM} is a pair of closed two-forms $(\omega, \eta)$ of constant and complementary ranks $2k$ and $2(n-k)$ respectively such that $\omega^{2k} \wedge \eta^{2(n-k)}$ is a volume form. To make the definition interesting, we will assume that $0 < k <n$. Then, when $M$ 
has dimension four  --- the case of interest in the present article --- a symplectic pair on $M$ 
can only be of type $(1,1)$. 

A symplectic pair gives rise to two symplectic forms
$$\Omega_+=\omega + \eta, \quad \Omega_-= \omega - \eta$$ 
on $M$ and on $(-1)^{n-p}M$, respectively, where $-M$ denotes the oriented manifold 
obtained by reversing the orientation of $M$. If $M$ has dimension four, a symplectic pair is equivalent to a pair of symplectic forms $(\Omega_+, \Omega_-)$ satisfying
$$\Omega_+^2 = - \Omega_-^2, \quad \Omega_+ \wedge \Omega_-=0.$$
In particular $M$ is symplectic for both orientations and $b_\pm(M) > 0$.

The kernels of $\omega$ and $\eta$ are integrable complementary distributions and therefore
integrate to a pair of transverse foliations ${\mathcal F}_\omega$ and ${\mathcal F}_\eta$, called {\it characteristic foliations},
such that
$$T{\mathcal F}_\omega=\ker \omega \quad \text{and} \quad T{\mathcal
  F}_\eta =\ker \eta.$$ 
See~\cite{BK} for example. Each form is symplectic on the leaves of the foliation induced 
by the other form and moreover ${\mathcal F}_\omega$ and ${\mathcal F}_\eta$ are 
symplectically orthogonal with respect to the symplectic forms $\Omega_{\pm}$.


Symplectic pairs appear naturally in the study of  Riemannian metrics for which the product 
of harmonic forms is still harmonic \cite{K} and in the investigation of the group cohomology 
of symplectomorphism groups \cite{KM}. 
In \cite{BK} several interesting examples and constructions are given, especially on closed 
four-manifolds. Among them we have manifold carrying Thurston geometries, flat symplectic 
bundles and Gompf's sum for symplectic pairs. Moreover it is proven that every 
$T^2$-bundle over $T^2$ carries a symplectic pair. Also in \cite{KM}, flat symplectic bundles 
are used to prove the existence of symplectic pairs on some closed four-manifold with non-vanishing signature.

At present the only known obstructions to the existence of a symplectic pair on closed 
manifolds are the obvious ones due to the cohomology classes determined by the symplectic 
pair, the existence of two transverse and complementary foliations and the fact that those 
manifolds are symplectic for both orientations.

This paper aims at being a first step towards the search of more refined obstructions to the 
existence of a symplectic pair and the classification of manifolds carrying such a structure. 
This is achieved by using the theory of $J$-holomorphic curves. To do that, we have to adapt 
the theory to our setting, meaning that we must consider almost complex structures 
for which the foliations have pseudoholomorphic leaves. 

Note that a symplectic pair provides no canonical way to choose an orientation over the other.
For this reason, every theorem we will state for $\Omega_+$ will also hold for $\Omega_-$.
 
Making use of some Bott-Baum formulas proved by Mu\~noz and Presas \cite{MP}, we first 
prove the following:
\begin{thm}\label{pippo}
Let $M$ be a closed $4$-manifold admitting a symplectic pair $(\omega, \eta)$. Then 
$(M, \Omega_+)$ is minimal.
\end{thm}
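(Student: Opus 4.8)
The plan is to adapt McDuff's study of rational and ruled symplectic four-manifolds to the pair of foliations $\mathcal F_\eta:=\ker\eta$, $\mathcal F_\omega:=\ker\omega$. Set $L_1:=T\mathcal F_\eta$ and $L_2:=T\mathcal F_\omega$; being transverse and complementary they give $TM=L_1\oplus L_2$. Since $\omega$ has kernel $L_2$ and restricts to an area form on $L_1$, while $\eta$ has kernel $L_1$ and restricts to an area form on $L_2$, one checks that $L_1$ and $L_2$ are $\Omega_+$-orthogonal and that $\Omega_+$ is positive on each; hence there is an almost complex structure $J$ compatible with $\Omega_+$ that preserves the splitting. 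For such a $J$ both foliations have $J$-holomorphic leaves, $L_1$ and $L_2$ are complex line bundles, and $-K_{\Omega_+}=c_1(L_1)+c_1(L_2)$.

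First I would suppose that $(M,\Omega_+)$ is not minimal, so there is an exceptional class $E$, with $E\cdot E=-1$ and $K_{\Omega_+}\cdot E=-1$. As $M$ admits a symplectic pair, $\Omega_+$ and $\Omega_-$ make $M$ symplectic for both orientations, so $b^+(M),b^-(M)\geq1$; and by Taubes's theory (with the refinements of Li--Liu when $b^+(M)=1$) the class $E$ is represented, for our chosen $J$, by a possibly reducible and non-reduced $J$-holomorphic curve $\mathcal C=\sum_i m_i C_i$ with $\bigcup_i C_i$ connected and $\sum_i m_i[C_i]=E$ --- one may also obtain such a $\mathcal C$ by deforming $J$ from an $\Omega_+$-tamed structure $J_0$ for which an embedded symplectic $(-1)$-sphere is holomorphic. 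Now I would feed the components $C_i$ into the Bott--Baum formulas of Mu\~noz--Presas: if $C_i$ is not contained in a leaf of $\mathcal F_\eta$, then the (non-negative) tangency index of $C_i$ with $\mathcal F_\eta$ gives $\langle c_1(L_2),[C_i]\rangle\geq\chi(\widehat C_i)$, and symmetrically $\langle c_1(L_1),[C_i]\rangle\geq\chi(\widehat C_i)$ if $C_i$ is not contained in a leaf of $\mathcal F_\omega$; here $\widehat C_i$ is the normalisation. A component that does lie in a leaf forces that leaf to be compact (a non-constant compact curve maps holomorphically into a leaf only when the leaf is compact), and is then a branched cover of it, so $\langle c_1(L_1),[C_i]\rangle$ is a multiple of the Euler characteristic of that leaf and $\langle c_1(L_2),[C_i]\rangle$ a multiple of its self-intersection.

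Finally I would assemble these facts against $1=\langle c_1(TM),E\rangle=\sum_i m_i\big(\langle c_1(L_1),[C_i]\rangle+\langle c_1(L_2),[C_i]\rangle\big)$ and $-1=E\cdot E=\sum_i m_i\,\langle E,[C_i]\rangle$. A compact \emph{spherical} leaf is impossible: by Reeb stability the foliation would be an $S^2$-fibration over a closed surface and the complementary foliation would make it flat, forcing $M$ to be a flat $S^2$-bundle over a surface, which is symplectically minimal; for compact leaves of positive genus the Euler number (hence self-intersection) is bounded by the flatness of the Bott connection on the normal bundle (Milnor--Wood), and vanishes for tori. A genus-zero component not contained in a leaf is, by the Bott--Baum bounds, an $\Omega_+$-symplectic sphere of self-intersection $\geq2$, so by McDuff $M$ is rational; but then $b^\pm(M)\geq1$, together with Seiberg--Witten vanishing for connected sums with $b^+\geq1$ summands, leaves only $M=S^2\times S^2$, again minimal. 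Combining the resulting constraints on the numbers $\langle c_1(TM),[C_i]\rangle$ with adjunction and with positivity of intersections of distinct $J$-holomorphic components should then yield the contradiction. I expect this last step to be the real difficulty: the Bott--Baum inequalities are not sharp for positive-genus components or for multiply covered higher-genus leaves, so one has to play the connectedness of $\mathcal C$, the positivity of $\langle\Omega_+,[C_i]\rangle$ and of mutual intersections, and the adjunction inequality against one another to exclude every nontrivial configuration, and separately rule out the remaining borderline types such as $\C P^2\#\overline{\C P^2}$.
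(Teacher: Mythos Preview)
Your proposal identifies the right ingredients (the splitting $TM=L_1\oplus L_2$, a foliation-preserving $J$, the Mu\~noz--Presas inequalities, Reeb stability) but leaves the argument unfinished: you end with a nodal curve $\mathcal C=\sum m_iC_i$ in the class $E$ and a case analysis that you yourself flag as ``the real difficulty'' and do not carry out. As written this is a plan, not a proof.

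The paper avoids that combinatorial morass entirely, and the idea you are missing is \emph{generic transversality within} $\mathcal J(\omega,\eta)$. The technical core of the paper (its Section~4) shows that for a generic $J$ compatible with $\Omega_+$ and preserving \emph{both} foliations, the linearised Cauchy--Riemann operator is onto at every simple $J$-sphere: for maps not contained in a leaf one uses that the set of points where $du$ hits neither $T\mathcal F_\omega$ nor $T\mathcal F_\eta$ is open and dense (Lemma~3.1 of the paper), which gives enough freedom in the restricted tangent space $T_J\mathcal J(\omega,\eta)$ to run the usual Sard--Smale argument; for maps landing in a leaf, Reeb stability forces the leaf to be a sphere with trivial normal bundle, and Hofer--Lizan--Sikorav automatic transversality applies. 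Once a generic $J\in\mathcal J(\omega,\eta)$ is \emph{regular} in the ordinary sense, the standard result (Wendl's Theorem~5.1, or McDuff's original argument) gives that the exceptional symplectic sphere is isotopic to an \emph{embedded} $J$-holomorphic sphere $S$. No Taubes theory, no nodal limits, no multi-component bookkeeping.

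From there the proof is two lines: $S$ cannot be a leaf (a spherical leaf has $S\cdot S=0$ by Reeb stability), so Proposition~3.2 applies and gives $S\cdot S\ge\chi(S)=2$, contradicting $S\cdot S=-1$. Your detour through Seiberg--Witten and flat $S^2$-bundles, and the attempt to rule out configurations like $\C\PP^2\#\overline{\C\PP^2}$ by hand, are all symptoms of not having the embedded representative; establishing regularity in $\mathcal J(\omega,\eta)$ is the step that makes the whole argument short.
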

\begin{rem}\label{CP2 scoppiato}
Theorem \ref{pippo} implies that $\C P^2 \# \overline{\C P^2}$, which is the total space of the nontrivial $S^2$-bundle over $S^2$, does not admit a symplectic pair. However, it fulfils the cohomological properties required for the existence of a symplectic pair, has a splitting of
  the tangent bundle into rank-two subbundles and is symplectic for
  both orientations because it admits an orientation-reversing self-diffeomorphism.
\end{rem}

By applying the strategy of McDuff \cite{MC}, we can classify four-dimensional manifolds 
carrying symplectic pairs which admit embedded symplectic spheres with nonnegative 
self-intersection:
\begin{thm}\label{pluto} 
 Let $M$ be a closed four-manifold admitting a symplectic pair $(\omega, \eta)$ 
and let $S \hookrightarrow (M, \Omega_+)$ be a symplectically embedded sphere.
\begin{enumerate}
\item If $S \cdot S=0$, then $M$ is the total space of a flat
symplectic sphere bundle over a surface $\Sigma$, the fibres of  $M \to \Sigma$ are the 
leaves of one of the the foliations and $S$ is isotopic to a fibre.
\item If $S \cdot S >0$, then $M= S^2 \times S^2$ and $(\omega, \eta)$ is the symplectic 
pair induced by the product.
\end{enumerate}
\end{thm}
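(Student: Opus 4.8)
The plan is to run McDuff's scheme \cite{MC} with an almost complex structure tailored to the symplectic pair, and then to read off from cohomology that the resulting $J$-holomorphic ruling is one of the two kernel foliations. Write $\Omega_\pm = \omega \pm \eta$, and let $\mathcal F_\omega = \ker\omega$ and $\mathcal F_\eta = \ker\eta$ be the two kernel foliations; they are everywhere transverse and complementary, $T\mathcal F_\omega \oplus T\mathcal F_\eta = TM$. The relations \eqref{eq: two forms} translate, in $H^2(M;\R)$, into $[\omega]^2 = [\eta]^2 = 0$ and $[\omega]\cdot[\eta] > 0$, so $[\omega]$ and $[\eta]$ form a basis of $H^2(M;\R)$ and lie on the two distinct null lines of the (signature $(1,1)$) intersection form. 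First I would fix an $\Omega_+$-compatible $J$ that is \emph{adapted} to the pair, i.e.\ leaves $T\mathcal F_\omega$ and $T\mathcal F_\eta$ invariant; such $J$ exist because $\Omega_+$ restricts to an area form on every leaf of either foliation, and for such a $J$ the leaves are embedded $J$-holomorphic curves. The elementary point I would establish next is that, decomposing a tangent vector along $T\mathcal F_\omega \oplus T\mathcal F_\eta$ and using $\Omega_+|_{T\mathcal F_\eta} = \omega|_{T\mathcal F_\eta}$ and $\Omega_+|_{T\mathcal F_\omega} = \eta|_{T\mathcal F_\omega}$, both $\omega$ and $\eta$ are non-negative on every $J$-complex line, with $\omega$ vanishing exactly on $T\mathcal F_\omega$ and $\eta$ exactly on $T\mathcal F_\eta$. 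Hence for any $J$-holomorphic curve $C$ we get $\int_C\omega = [\omega]\cdot[C] \ge 0$ and $\int_C\eta = [\eta]\cdot[C] \ge 0$, with $\int_C\omega = 0$ (resp.\ $\int_C\eta = 0$) if and only if $C$ is a leaf of $\mathcal F_\omega$ (resp.\ of $\mathcal F_\eta$).

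Next I would invoke McDuff's classification together with Theorem \ref{pippo}. A closed symplectic four-manifold with an embedded symplectic sphere of non-negative self-intersection is rational or ruled, and, being minimal by Theorem \ref{pippo}, $(M,\Omega_+)$ is $\mathbb{CP}^2$, $S^2\times S^2$, or an $S^2$-bundle over a surface; if $S\cdot S = 0$ it is an $S^2$-bundle with $[S]$ the fibre class, and $S$ is isotopic to a fibre. The case $\mathbb{CP}^2$ is excluded because $M$ must be symplectic for both orientations while $\overline{\mathbb{CP}^2}$ has $b_2^+ = 0$ and carries no symplectic form. So in case $(1)$, $M$ is an $S^2$-bundle with fibre class $A := [S]$, $A\cdot A = 0$; in case $(2)$, $M = S^2\times S^2$, which carries two rulings with square-zero fibre classes $A_1, A_2$, $A_1\cdot A_2 = 1$. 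The main obstacle is to check that McDuff's $J$-holomorphic sphere analysis goes through for the \emph{adapted} $J$ above rather than a generic one: I would argue that, since $c_1\cdot A = 2 > 0$, the relevant spheres are automatically regular, and that by Gromov compactness any bubbling in a sequence of them would produce a $J$-holomorphic sphere of negative square, i.e.\ a symplectic exceptional sphere, contradicting minimality (Theorem \ref{pippo}); hence for the adapted $J$ the moduli space of embedded $J$-holomorphic spheres in each such fibre class is a closed surface, its members are pairwise disjoint by positivity of intersections, and they sweep out $M$, producing a $J$-holomorphic ruling.

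Then the cohomology closes the argument. Each of $A$, $A_1$, $A_2$ has square zero, hence lies on one of the two null lines and is a real multiple of $[\omega]$ or of $[\eta]$; in particular it pairs to zero with one of $\omega,\eta$, and, since $[\Omega_+]$ pairs positively with it, positively with the other. By the observation of the first paragraph, every $J$-holomorphic sphere in such a class is a leaf of $\mathcal F_\omega$ or of $\mathcal F_\eta$, and since these spheres foliate $M$ and each is a leaf of the same kernel foliation, that kernel foliation coincides with the $J$-holomorphic ruling. In case $(1)$ this makes the fibres of $M\to\Sigma$ the leaves of one foliation, say $\mathcal F_\omega$, with $S$ isotopic to a $J$-holomorphic fibre and hence to a leaf. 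Finally, the complementary foliation $\mathcal F_\eta$ is a distribution transverse to the fibres, i.e.\ a flat Ehresmann connection for $M\to\Sigma$; along any path in $\Sigma$ the horizontal lift vector field $X$ lies in $\ker\eta$, so $\mathcal L_X\eta = d\iota_X\eta + \iota_X d\eta = 0$ and parallel transport preserves $\eta$, hence preserves the fibrewise area form it induces. Therefore $M\to\Sigma$ is a flat symplectic sphere bundle, which proves $(1)$.

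For $(2)$ I would apply the same discussion to both rulings, identifying them with $\mathcal F_\omega$ and $\mathcal F_\eta$; since $A_1\cdot A_2 = 1$, each leaf of one foliation meets each leaf of the other once, so the pair of projections gives a diffeomorphism $M\cong S^2\times S^2$ carrying $\mathcal F_\omega$ and $\mathcal F_\eta$ to the two product foliations. Under this identification $\ker\omega$ is tangent to the second factor, so the closed form $\omega$ is basic for the first projection and descends to a form on $S^2$, nondegenerate because $\omega$ restricts to an area form on the leaves of $\mathcal F_\eta$, which project diffeomorphically; symmetrically for $\eta$. Hence $(\omega,\eta)$ is the symplectic pair induced by the product, which proves $(2)$. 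The part I expect to require the most care is precisely the passage from a generic to the adapted $J$ in McDuff's scheme and the exclusion of degenerations; everything downstream is linear algebra, positivity of intersections, and the definition of a flat symplectic bundle.
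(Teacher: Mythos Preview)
Your argument is correct and reaches the same conclusion, but by a genuinely different route from the paper's.

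The paper invests Section~\ref{sec: moduli} in proving that a \emph{generic} $J\in\mathcal J(\omega,\eta)$ is regular (Corollary~\ref{regularity 2}), so that McDuff's scheme can be run directly inside the adapted class. You bypass this by deforming from an arbitrary compatible $J$ to an adapted one, using automatic transversality for embedded spheres with $c_1\cdot A=2$ and excluding bubbling via Theorem~\ref{pippo}; this is lighter but less self-contained. More strikingly, the paper identifies each fibre as a leaf by appealing to the Mu\~noz--Presas tangency inequality (Proposition~\ref{p:adjontion-2}): a $J$-holomorphic sphere not contained in a leaf would satisfy $0=S\cdot S\ge\chi(S)=2$. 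It then needs a separate open--closed argument on $\Sigma$ to show that \emph{all} fibres are leaves of the \emph{same} foliation. Your route is purely cohomological: once McDuff gives $b_2(M)=2$, the null cone of the intersection form is the two lines $\R[\omega]$ and $\R[\eta]$, so the fibre class pairs to zero with one of $\omega,\eta$; combined with your elementary observation that $\omega,\eta\ge 0$ on $J$-complex lines for adapted $J$, this forces every $J$-holomorphic sphere in that class to be a leaf, and of the same foliation automatically. This is more elementary and avoids the tangency-number machinery entirely, at the price of being specific to the ruled situation (the paper's inequality works for arbitrary $J$-holomorphic curves and is also what drives Theorem~\ref{pippo}). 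One small correction: your assertion that $[\omega],[\eta]$ form a basis of $H^2(M;\R)$ with signature~$(1,1)$ is not a consequence of \eqref{eq: two forms} alone---it needs $b_2=2$, which you only know \emph{after} McDuff's classification; you should move that sentence past the point where $M$ is known to be ruled. Finally, you make the ``flat symplectic'' conclusion explicit via $\mathcal L_X\eta=0$ for $X\in\ker\eta$, which the paper leaves implicit.
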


Finally, we prove a converse to Theorem \ref{pluto} by determining which sphere bundles
over a surface carry a symplectic pair.
\begin{thm}\label{paperino}
Let $M$ be the total space of a sphere bundle over a surface $\Sigma$. If the bundle is trivial
or $\Sigma$ has positive genus, then $M$ carries a symplectic pair such that the fibres are 
leaves of one of the characteristic foliations.
\end{thm}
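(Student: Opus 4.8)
The plan is to realise $M$ as a \emph{flat} $S^2$-bundle with structure group contained in the isometry group $SO(3)$ of the round sphere, and then to build the symplectic pair directly out of the round area form on the fibre and an area form on $\Sigma$. Recall first that an oriented $S^2$-bundle over a surface $\Sigma$ is the same thing as an $SO(3)$-bundle (using $\mathrm{Diff}^+(S^2)\simeq SO(3)$), and that over a two-dimensional base such a bundle is classified by its second Stiefel--Whitney class in $H^2(\Sigma;\Z/2)\cong\Z/2$; so up to isomorphism there is only the trivial bundle and one nontrivial bundle. The trivial bundle is flat with trivial holonomy, for any $\Sigma$. Thus the whole point is to show that, when $\Sigma$ has genus $g\ge 1$, the nontrivial bundle is also flat.

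For that I would exhibit a representation $\rho\colon\pi_1(\Sigma)\to SO(3)$ whose associated flat bundle has nonzero $w_2$. It suffices to work on a single handle: send the two generators $a,b$ of $\pi_1(\Sigma)$ to the rotations by $\pi$ about two orthogonal axes, and all remaining generators to the identity. These rotations commute in $SO(3)$, so the surface relation $\prod[\rho(a_i),\rho(b_i)]=\mathrm{Id}$ holds; on the other hand their canonical lifts to $SU(2)=\mathrm{Spin}(3)$ anticommute, so $\prod[\widetilde{\rho(a_i)},\widetilde{\rho(b_i)}]=-\mathrm{Id}$, and this element of $\{\pm\mathrm{Id}\}=\Z/2$ is exactly the obstruction to lifting $\rho$ to $\mathrm{Spin}(3)$, i.e.\ $\langle w_2,[\Sigma]\rangle\neq 0$. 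Hence the nontrivial $S^2$-bundle is the flat bundle associated to this $\rho$. This identification is the only place where positive genus enters, and it is the technical heart of the argument; for $\Sigma=S^2$ it genuinely fails, in accordance with Theorem~\ref{pluto}(2).

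Now write $M=\widetilde\Sigma\times_\rho S^2$ for an appropriate $\rho\colon\pi_1(\Sigma)\to SO(3)$ (the trivial one in the trivial case, the one above in the nontrivial case over a positive-genus surface). Let $\sigma_0$ be the round area form on $S^2$. Since $\sigma_0$ is $SO(3)$-invariant, it is invariant under the diagonal $\pi_1(\Sigma)$-action on $\widetilde\Sigma\times S^2$, so $\mathrm{pr}_2^*\sigma_0$ descends to a closed $2$-form $\omega$ on $M$; by construction $\omega$ has constant rank $2$, its kernel foliation $\mathcal H$ is the horizontal (flat) foliation, and $\omega$ restricts to an area form on each fibre. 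Let $\eta=p^*\sigma_\Sigma$ for an area form $\sigma_\Sigma$ on $\Sigma$ (here one uses that $\Sigma$ is oriented, which is forced as soon as an oriented $S^2$-bundle is to carry the desired structure). Then $\eta$ is closed of constant rank $2$, its kernel foliation $\mathcal V$ is the vertical foliation by fibres, and since each leaf of $\mathcal H$ covers $\Sigma$ via $p$, the form $\eta$ restricts to a symplectic form on the leaves of $\mathcal H$.

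Finally I would verify that $(\omega,\eta)$ is a symplectic pair: the kernel distributions $\mathcal H=\ker\omega$ and $\mathcal V=\ker\eta$ are transverse and complementary, both are integrable (one is the given fibration, the other the flat connection), $\omega|_{\ker\eta}$ is symplectic on each fibre, and $\eta|_{\ker\omega}$ is symplectic on each horizontal leaf --- which is precisely the definition. Since the fibres of $M\to\Sigma$ are by construction the leaves of $\ker\eta$, one of the two characteristic foliations, the theorem follows. The only routine verifications left over are that the descended form $\omega$ is well defined and closed and the bookkeeping identifying the two kernels; the genuine input is the flatness of the nontrivial bundle over a positive-genus surface.
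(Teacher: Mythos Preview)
Your proposal is correct and follows essentially the same route as the paper: realise the bundle as a flat $SO(3)$-bundle, use the $SO(3)$-invariance of the round area form to build the fibrewise form $\omega$ and pull back an area form from $\Sigma$ for $\eta$, and for the nontrivial bundle over a positive-genus surface send one pair of handle generators to two rotations by $\pi$ about orthogonal axes (equivalently, to the images of $\mathbf{i},\mathbf{j}\in SU(2)$), which commute in $SO(3)$ but anticommute in $SU(2)$, so the representation does not lift and the bundle is nontrivial. The only cosmetic difference is that the paper packages the symplectic-pair construction by citing \cite{BK} and the nontriviality via Lemma~\ref{triviality criterion}, whereas you spell both out explicitly; your $w_2$ formulation is exactly the content of that lemma.
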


We say that a four-manifold is {\em amphisymplectic} if it admits symplectic forms inducing opposite orientations. We have seen that four-manifolds admitting contact pairs are amphisymplectic, and it is natural to investigate how much of the main results of this article extend to this more general setting. For amphisymplectic four-manifolds a weaker version of the results of this article can be obtained via Seiberg-Witten theory. In fact, well-known results of Taubes, Kronheimer and Mrowka \cite{KM2, T} imply that if $M$ is a symplectic four-manifold with $b_+(M) >1$, then every homologically essential embedded sphere $S$ satisfies $S \cdot S<0$. Considering that the intersection product changes sign when the orientation of the manifold is reversed, we obtain the following.
\begin{prop}\label{amphisymplectic almost minimal}
If $M$ is an amphisymplectic four-manifold and $S \subset M$ and embedded sphere with $S \cdot S=-1$, then $b_-(M)=1$.
\end{prop}
We have seen in Remark~\ref{CP2 scoppiato} that $\C \PP^1 \# \overline{\C \PP}^1$ is amphisymplectic but does not admit a symplectic pair. This example shows that Proposition~\ref{amphisymplectic almost minimal} is not sufficient to recover Theorem~\ref{pippo}.

Combining Proposition \ref{amphisymplectic almost minimal} with McDuff's classification of rational and ruled symplectic manifolds \cite{MC}, we obtain the following corollary.
\begin{cor}\label{pluto}
Let $M$ be an amphisymplectic manifolds. If there exists an 
embedded symplectic sphere $S\subset M$ with $S \cdot S \ge 0$, then $M$ is an $S^2$-bundle over a surface $\Sigma$. Moreover, if $S\cdot S>0$, then $\Sigma=S^2$.
\end{cor}

This article is organised as follows. In Sections 
\ref{sec: aggiunzione} we will  discuss the 
version of the Bott-Baum formulas we will use. In Section \ref{sec: moduli} we will develop 
the technical details useful to adapt the theory of $J$-holomorphic curves to symplectic pairs. 
Section \ref{sec: main} contains the proofs of the main theorems. 

\section{Adjunction formulas}\label{sec: aggiunzione}
In this section we collect some results about $J$-holomorphic curves  in four-manifolds 
admitting $J$-holomorphic foliations. By a {\em $J$-holomorphic curve} we will mean 
a close, connected and embedded surface $S \subset M$ such that $J(TS)=TS$. Let $(M, 
{\mathcal F})$ be a four-manifold  endowed with a codimension two foliation. Throughout 
this section we will assume that $J$ is an almost complex structure on $M$ which preserves 
the tangent distribution $T{\mathcal F}$. 

Now we study how $J$-holomorphic curves which are not leaves of ${\mathcal F}$ intersect 
${\mathcal F}$. The starting point is the following lemma.
\begin{lemma}\label{tangenze discrete}
Let $S \subset M$ be a $J$-holomorphic curve which is not a leaf of 
${\mathcal F}$. Then the set ${\mathcal T}_S \subset S$ defined as
\[ {\mathcal T}_S = \{ x \in S : T_zS \subset T_z{\mathcal F} \} \]
is finite.
\end{lemma}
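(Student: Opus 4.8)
The plan is to localize near a point of $\mathcal{T}_S$, model the foliation by a foliated chart, and exploit that the projection onto the local leaf space restricts on $S$ to a solution of a perturbed Cauchy--Riemann equation whose critical locus is exactly $\mathcal{T}_S$; the similarity principle then forces that locus to be discrete unless $S$ lies in a leaf. To begin, note that $\mathcal{T}_S$ is closed in $S$: it is the set where the continuous $2$-plane fields $TS$ and $T\mathcal{F}$ coincide (equivalently, the vanishing locus of the continuous section of $\operatorname{Hom}(TS,(TM/T\mathcal{F})|_S)$ obtained by composing $TS\hookrightarrow TM|_S$ with the projection $TM|_S\to (TM/T\mathcal{F})|_S$). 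Since $S$ is compact, it therefore suffices to prove that $\mathcal{T}_S$ is discrete. Fix $x_0\in\mathcal{T}_S$ and choose a foliated chart $U\cong D^2\times D^2$ around $x_0$ in which the plaques are the slices $D^2\times\{\mathrm{pt}\}$; let $\pi\colon U\to D^2$ be the second projection, so $\ker d\pi=T\mathcal{F}|_U$. Because $J(T\mathcal{F})=T\mathcal{F}$, the map $d\pi$ descends to a $\mathbb{C}$-linear isomorphism $TU/T\mathcal{F}\to\pi^{*}TD^2$, hence equips each plane $T_{\pi(x)}D^2$ with a complex structure $J_x$ depending smoothly on $x\in U$. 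Composing the chart with a suitable linear automorphism preserving the product foliation, we may assume $J_{x_0}$ is the standard complex structure of $D^2\subset\mathbb{C}$; after shrinking $U$, all the $J_x$ are then uniformly close to the standard one.

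Now set $\iota\colon S\cap U\hookrightarrow U$, $f=\pi\circ\iota\colon S\cap U\to D^2$, and $j=J|_{TS}$. Two facts drive the argument. First, at each $\zeta\in S\cap U$ one has $\ker df_\zeta=T_\zeta S\cap T_\zeta\mathcal{F}$, which is $J$-invariant and hence of real dimension $0$ or $2$; so $\zeta$ is a critical point of $f$ exactly when $df_\zeta=0$, i.e.\ exactly when $\zeta\in\mathcal{T}_S$, whence $\mathcal{T}_S\cap U=\operatorname{Crit}(f)$. Second, since $S$ is $J$-holomorphic, $df_\zeta$ intertwines $j$ with $J_{\iota(\zeta)}$; writing this out in a local holomorphic coordinate $s$ on $S$ and the standard coordinate on $D^2$, and using that $J_{\iota(\zeta)}$ is close to the standard structure, $f$ is seen to satisfy a perturbed Cauchy--Riemann equation $\partial_{\bar s}f=\alpha\,\partial_s f+\beta\,\overline{\partial_s f}$ with $|\alpha|,|\beta|$ small. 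By the similarity principle (the Carleman-type argument underlying positivity of intersections and the local structure of pseudoholomorphic maps), $f$ agrees near $x_0$ with $g\circ h$ for some holomorphic $g$ and some diffeomorphism $h$; hence $\operatorname{Crit}(f)=\operatorname{Crit}(g)$ is discrete near $x_0$, unless $g$ --- equivalently $f$ --- is constant there.

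It remains to exclude the constant case. If $f\equiv\pi(x_0)$ on a neighbourhood of $x_0$ in $S$, then a neighbourhood of $x_0$ in $S$ lies inside the plaque $D^2\times\{\pi(x_0)\}$, hence inside a single leaf. Let $A\subseteq S$ be the set of points possessing a neighbourhood in $S$ contained in one leaf; then $A$ is open and, by the previous sentence, nonempty. It is also closed: given $z_n\in A$ with $z_n\to z$, take a foliated chart around $z$ with leaf-space projection $\pi'$ and a connected neighbourhood $V\ni z$ in $S$ inside that chart; for large $n$ one has $z_n\in V$ and $\pi'|_V$ is constant near $z_n$, and since $\pi'|_V$ again solves a perturbed Cauchy--Riemann equation on the connected Riemann surface $V$ it must be constant on all of $V$, so $V$ lies in one plaque and $z\in A$. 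By connectedness $A=S$, so $\mathcal{T}_S=S$ and $S$, being a closed surface contained in a leaf, \emph{is} that leaf, contrary to hypothesis. Hence the constant case does not occur, $\mathcal{T}_S$ is discrete, and the lemma follows.

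The step I expect to be the main obstacle is the derivation in the second paragraph: extracting from ``$J$ preserves $T\mathcal{F}$'' that $f=\pi|_S$ genuinely solves an elliptic (if perturbed) Cauchy--Riemann equation, so that the similarity principle applies. By comparison, the identification $\operatorname{Crit}(f)=\mathcal{T}_S$ via the $J$-invariance of $T_\zeta S\cap T_\zeta\mathcal{F}$, and the open--closed argument ruling out the constant case, are routine.
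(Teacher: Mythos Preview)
Your argument is correct and follows essentially the same line as the paper's: both localize in a foliated chart, identify $\mathcal{T}_S$ with the critical locus of the leaf-space projection $f=u_2$, derive a perturbed Cauchy--Riemann equation for this projection from the hypothesis $J(T\mathcal{F})=T\mathcal{F}$, and then invoke a unique-continuation principle to force the critical set to be discrete unless $S$ lies in a leaf. The only notable difference is in the execution of the last step: the paper further normalizes the chart so that $J=J_0$ along the leaf through $x$, which upgrades the quasilinear equation you obtain to the \emph{linear} equation $\bar\partial u_2 + C\,u_2=0$, and then applies Aronszajn's theorem directly to extract the leading term $u_2(z)=az^l+o(|z|^l)$; you instead appeal to the similarity principle as a black box. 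One small caution: the factorization $f=g\circ h$ with $g$ holomorphic and $h$ a diffeomorphism is not the similarity principle per se (that gives $f=\Phi\cdot g$ pointwise) but rather the Micallef--White/McDuff local normal form for pseudoholomorphic maps, and your equality $\operatorname{Crit}(f)=\operatorname{Crit}(g)$ should read $\operatorname{Crit}(f)=h^{-1}(\operatorname{Crit}(g))$; neither affects the conclusion.
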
 
\begin{proof}
We follow the proof of \cite[Lemma~2.4.3]{MCS2} very closely to show that every point
$x \in S$ has a neighbourhood which either is contained in a leaf, or intersects 
${\mathcal T}_S$ in a finite set. Since $S$ is compact and connected and ${\mathcal T}_S$ 
is closed, this will prove the lemma.

Given $x \in S \setminus int({\mathcal T}_S)$, after choosing coordinates $z=s+it$ in $S$ and 
$(w_1, w_2)$ in $M$ around $x$, we can assume that:  
\begin{itemize}
\item[(i)] $S$ around $x$ is parametrised by a smooth map $u \colon \Omega \to \C^2$ for 
$\Omega \subset \C$ an open neighbourhood of $0$ and $u(0)=0$,
\item[(ii)] the leaves of ${\mathcal F}$ are the planes with constant coordinate $w_2$, 
\item[(iii)] $u = (u_1, u_2)$ with $u_2$ not identically zero, and
\item[(iv)] $J(w_1, 0)=J_0$ for any $w_1 \in \C$, where $J_0$ is the canonical almost complex 
structure on $\C^2$.
\end{itemize}
Note that we consider here $\C^2$ as a real manifold with an almost complex structure $J \colon \C^2 \to GL(4, \R)$.

By condition  (iv) we can write $J(w_1, w_2)= J_0 + I(w_1, w_2)w_2$ for a smooth map
$I \colon \C^2 \to \op{Hom}_\R (\C, \op{End}_\R(\C^2))$. Using this decomposition, we 
can show that $u_i$, $i=1,2$, satisfies the equation
\begin{equation} \label{quasi CR}
\frac{\partial u_i}{\partial s} + J_0 \frac{\partial u_i}{\partial t} + C_iu_2 
\end{equation}
for a smooth function $C_i \colon \Omega \to \op{End}_\R(\C)$. If we apply $\partial_s - J_0 
\partial_t$ to the above equation for $i=2$ and estimate the terms of order zero and one, 
we obtain the inequality
\[ | \Delta u_2 | \le c( |u_2| + |\nabla u_2|), \]
which holds in a possibly smaller open neighbourhood of of $0$ in $\Omega$. Then 
Aronszajn's theorem \cite[Theorem~2.3.4]{MCS2} implies that the Taylor expansion of $u_2$ 
at $0$ is non trivial, and thus there exist real polynomials $(p_1, p_2)$ of degree $l >0$ such 
that $u_i(s,t)=p_i(s,t)+ o(|z|^l)$ and $p_2 \ne 0$ is homogeneous. Since $C_iu_2 =o(|z|^l)$, 
Equation
\eqref{quasi CR} implies that $p_1$ and $p_2$ are complex polynomials; in particular 
$p_2(z)=az^l$ for $a \in \C \setminus \{ 0 \}$.

Tangencies between $S$ and ${\mathcal F}$ are the same as critical points of $u_2$, so it 
will be enough to show that, in some neighbourhood of $0$, the function $u_2$ has no 
critical point other than (possibly) $0$ itself. In fact, $u_2'(z) = laz^{l-1} +o(|z|^{l-1})$ and 
therefore, for $|z| < \varepsilon$ sufficiently small, $|u_2'(z)-laz^{l-1}| < la|z|^{l-1}$. This 
implies that $u_2'(z) \ne 0$ if $0<|z|< \varepsilon$, and therefore proves the lemma.
\end{proof}

The following formulas, proved by Mu\~noz and Presas \cite{MP}, generalise on the one 
hand the intersection theory for $J$-holomorphic curves originally due to McDuff (see 
\cite[Appendix~E]{MCS2} for a comprehensive treatment), and on the other hand some 
results of Brunella \cite{Bru} in the context of holomorphic foliations on compact complex 
surfaces. Brunella, as well as Mu\~noz and Presas, consider also singular foliations. We will 
not need that level of generality.

Given an embedded $J$-holomorphic curve $S$ which is not contained in a leaf of 
$\mathcal{F}$, one can define an integer number $\sigma(x,S,\mathcal{F})$ for each 
point $x\in S$ which verifies the following properties:
\begin{enumerate}
\item $\sigma(x,S,\mathcal{F})\geq 0$, and
\item $\sigma(x,S,\mathcal{F})=0$ if and only if $S$ and $\mathcal{F}$ are tranverse 
at $x$.
\end{enumerate}
Note that $\sigma(x,S,\mathcal{F}) >0$ only at finitely many points by 
Lemma~\ref{tangenze discrete}.\footnote{It seems to us that the argument given in 
\cite{MP}, based on the local intersection of $S$ with the leaves of ${\mathcal F}$, is not 
sufficient.}

The number $\sigma(x,S,\mathcal{F})$ is defined as 
$$\sigma(x,S,\mathcal{F}) = I(x, S, {\mathcal F}_x)-1,$$
where $I(x, S, {\mathcal F}_x)$ is the local intersection number at $x$ between $S$ and the 
leaf of ${\mathcal F}$ passing through $x$ defined by McDuff; 
see~\cite[Appendix~E.2]{MCS2}.

The {\em tangency number} of $S$ with respect to $\mathcal{F}$ is defined as
$$
\Sigma_\mathcal{F}(S)=\sum_{x\in S} \sigma(x,S,\mathcal{F}).
$$

Denoting by $N_\mathcal{F}$ the normal bundle of $\mathcal{F}$, the value of the first 
Chern class of $N_\mathcal{F}$ on $S$ can be computed as follows.
\begin{prop}[{\cite[Lemma~4.2]{MP}}]\label{p:adjontion-1}
For a compact embedded $J$-holomorphic curve $S$ in $M$ which is not a leaf of 
$\mathcal{F}$ we have:
$$
<c_1(N_\mathcal{F}), S>=\chi (S)+\Sigma_\mathcal{F}(S).
$$
\end{prop}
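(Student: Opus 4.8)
The plan is to prove Proposition~\ref{p:adjontion-1} by combining the classical adjunction-type computation for $J$-holomorphic curves with the local analysis of tangencies carried out in Lemma~\ref{tangenze discrete}. The underlying geometric idea is that the restriction $TS$, viewed inside $TM|_S$, splits (non-holomorphically but continuously) as $TS \oplus \nu_S$, while $TM|_S$ also carries the decomposition coming from the foliation, $T{\mathcal F}|_S \oplus N_{\mathcal F}|_S$. Projecting $TS$ into the quotient $TM|_S / T{\mathcal F}|_S = N_{\mathcal F}|_S$ gives a bundle map $\pi\colon TS \to N_{\mathcal F}|_S$ which, by Lemma~\ref{tangenze discrete}, is an isomorphism away from the finite set ${\mathcal T}_S$. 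Counting the zeros of this section of $\op{Hom}(TS, N_{\mathcal F}|_S)$ with multiplicity then yields
\[
<c_1(N_{\mathcal F}), S> \,-\, <c_1(TS), S> \;=\; \sum_{x \in {\mathcal T}_S} \op{ord}_x(\pi),
\]
and since $<c_1(TS), S> = \chi(S)$, it remains to identify each local order $\op{ord}_x(\pi)$ with $\sigma(x, S, {\mathcal F}) = I(x, S, {\mathcal F}_x) - 1$.

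First I would set up the local model at a tangency point $x \in {\mathcal T}_S$ exactly as in the proof of Lemma~\ref{tangenze discrete}: coordinates $(w_1, w_2)$ on $M$ in which the leaves of ${\mathcal F}$ are $\{w_2 = \mathrm{const}\}$, a parametrisation $u = (u_1, u_2)\colon \Omega \to \C^2$ of $S$ with $u(0) = 0$, and the Taylor expansion $u_2(z) = a z^l + o(|z|^l)$ with $a \neq 0$, $l \ge 1$. In these coordinates the projection $\pi$ is essentially $dz \mapsto du_2 = u_2'(z)\,dz$, whose vanishing order at $0$ is exactly $l - 1$. On the other hand, the local intersection number $I(x, S, {\mathcal F}_x)$ defined by McDuff (see \cite[Appendix~E.2]{MCS2}) between $S$ and the leaf $\{w_2 = 0\}$ through $x$ is the order of vanishing of $w_2$ along $S$, i.e.\ of $u_2$, which is $l$. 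Hence $\op{ord}_x(\pi) = l - 1 = I(x, S, {\mathcal F}_x) - 1 = \sigma(x, S, {\mathcal F})$. Summing over $x \in {\mathcal T}_S$ gives $\Sigma_{\mathcal F}(S)$ on the right-hand side, and the proposition follows. (Strictly, this reproves a result of Mu\~noz and Presas; the point of including the argument is to have a self-contained statement compatible with the definition of $\sigma$ recalled above.)

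The main obstacle is making the global zero-count rigorous despite $\pi$ being only continuous, not holomorphic, away from a neighbourhood of each tangency point. The clean way around this is to observe that, near each $x \in {\mathcal T}_S$, the expansion of Lemma~\ref{tangenze discrete} shows that $u_2$ (hence $\pi$) agrees to leading order with the \emph{holomorphic} map $z \mapsto a z^l$, so $\pi$ is homotopic, through nowhere-vanishing sections over a small punctured disc, to a holomorphic one with an isolated zero of order $l-1$; the local winding number is therefore well-defined and equals $l-1$. Globally, $c_1(N_{\mathcal F}) - c_1(TS)$ evaluated on $[S]$ is the algebraic count of zeros of \emph{any} generic section of $\op{Hom}(TS, N_{\mathcal F})|_S$, and since all the zeros of $\pi$ are isolated with positive local contributions $l-1 \ge 0$, the count is exactly $\Sigma_{\mathcal F}(S) \ge 0$. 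One should also verify the boundary case $l = 1$: there $\pi$ is invertible at $x$ (transverse intersection), contributing $0$, consistent with property~(2) of $\sigma$. Finally, a brief remark is warranted that this argument requires $S$ not to be a leaf of ${\mathcal F}$ — otherwise $\pi \equiv 0$ — which is exactly the standing hypothesis.
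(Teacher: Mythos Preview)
The paper does not actually prove this proposition: it is stated with attribution to \cite[Lemma~4.2]{MP} and no proof is given in the text. Your argument is therefore not competing with anything in the paper, and it is correct. The key observation --- that the composition $TS \hookrightarrow TM|_S \twoheadrightarrow N_{\mathcal F}|_S$ is a $\C$-linear bundle map (because $J$ preserves both $TS$ and $T{\mathcal F}$), hence a section of the complex line bundle $TS^{*} \otimes N_{\mathcal F}|_S$ whose zeros are exactly the tangency points --- is the standard one, and the local model from Lemma~\ref{tangenze discrete} gives precisely the identification $\op{ord}_x(\pi)=l-1=I(x,S,{\mathcal F}_x)-1$ that you need. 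One small point worth making explicit: the $\C$-linearity of $\pi$ is what guarantees you are computing $c_1$ of a \emph{line} bundle rather than the Euler class of a real rank-$4$ bundle; you use this implicitly but never state it. Otherwise the argument is complete and, in fact, is essentially the one Mu\~noz and Presas give (and Brunella before them in the integrable case).
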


Now we go back to the case of symplectic pairs on four-manifolds, where we have a pair of transverse foliations ${\mathcal F}_\eta$ and ${\mathcal F}_\omega$. 
\begin{prop}\label{p:adjontion-2}
Let $M$ be a four-manifold equipped with a pair of transverse codimension two foliations ${\mathcal F}_\eta$ and ${\mathcal F}_\omega$. If $J$ is an almost complex structure on $M$ preserving the distributions $T{\mathcal F}_\eta$ and $T{\mathcal F}_\omega$ and $S$ is a
$J$-holomorphic curve which is not a leaf of either ${\mathcal F}_\omega$ or ${\mathcal F}_\eta$, then 
\begin{equation} \label{main inequality}
S\cdot S \geq \chi (S).
\end{equation}
\end{prop}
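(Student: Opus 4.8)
The plan is to combine the two adjunction formulas coming from Proposition~\ref{p:adjontion-1} applied to the two foliations ${\mathcal F}_\omega$ and ${\mathcal F}_\eta$ separately, together with the decomposition of the normal bundles induced by the transversality of the foliations. Since ${\mathcal F}_\omega$ and ${\mathcal F}_\eta$ are transverse and complementary, the tangent distribution $T{\mathcal F}_\omega$ is a complement to $T{\mathcal F}_\eta$ in $TM$, so $T{\mathcal F}_\omega \cong N_{{\mathcal F}_\eta}$ and $T{\mathcal F}_\eta \cong N_{{\mathcal F}_\omega}$ as $J$-complex line bundles. Consequently
\[
TM \cong N_{{\mathcal F}_\omega} \oplus N_{{\mathcal F}_\eta}
\]
as complex vector bundles, and taking first Chern classes gives $c_1(TM) = c_1(N_{{\mathcal F}_\omega}) + c_1(N_{{\mathcal F}_\eta})$.

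The second ingredient is the ordinary adjunction formula for the embedded $J$-holomorphic curve $S$: since $S$ is embedded and $J$-holomorphic, $\langle c_1(TM), S\rangle = \chi(S) + S\cdot S$ (the normal bundle of $S$ has degree $S\cdot S$ and the tangent bundle of $S$ has degree $\chi(S)$). First I would pair the bundle isomorphism above with the class $[S]$ to obtain
\[
\chi(S) + S\cdot S = \langle c_1(N_{{\mathcal F}_\omega}), S\rangle + \langle c_1(N_{{\mathcal F}_\eta}), S\rangle.
\]
Now, because $S$ is a $J$-holomorphic curve which is a leaf of neither foliation, Proposition~\ref{p:adjontion-1} applies to each of ${\mathcal F}_\omega$ and ${\mathcal F}_\eta$, giving $\langle c_1(N_{{\mathcal F}_\omega}), S\rangle = \chi(S) + \Sigma_{{\mathcal F}_\omega}(S)$ and $\langle c_1(N_{{\mathcal F}_\eta}), S\rangle = \chi(S) + \Sigma_{{\mathcal F}_\eta}(S)$. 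Substituting these two identities into the displayed equation yields
\[
\chi(S) + S\cdot S = 2\chi(S) + \Sigma_{{\mathcal F}_\omega}(S) + \Sigma_{{\mathcal F}_\eta}(S),
\]
hence $S\cdot S = \chi(S) + \Sigma_{{\mathcal F}_\omega}(S) + \Sigma_{{\mathcal F}_\eta}(S)$. Since both tangency numbers are sums of the non-negative integers $\sigma(x,S,\mathcal F)$, we conclude $S\cdot S \geq \chi(S)$, which is Inequality~\eqref{main inequality}.

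The main obstacle to watch is the justification of the complex-bundle splitting $TM\cong N_{{\mathcal F}_\omega}\oplus N_{{\mathcal F}_\eta}$: one must verify that $J$ genuinely preserves each of $T{\mathcal F}_\omega$ and $T{\mathcal F}_\eta$ (which is exactly the running hypothesis on $J$), that the two distributions are transverse and of complementary rank (built into the definition of a symplectic pair and the hypothesis of the proposition), and that the identification $N_{{\mathcal F}_\eta}\cong T{\mathcal F}_\omega$ is an isomorphism of complex line bundles and not merely of real plane bundles — this follows since the projection $TM\to N_{{\mathcal F}_\eta}$ restricted to $T{\mathcal F}_\omega$ is a $J$-linear isomorphism. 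A secondary point is that Proposition~\ref{p:adjontion-1} requires $S$ to be a leaf of neither foliation; this is precisely the hypothesis of the present proposition, so applying it to both foliations simultaneously is legitimate, and Lemma~\ref{tangenze discrete} guarantees both tangency numbers are finite sums. No compactness or positivity of $\chi(S)$ is needed: the inequality holds for $S$ of any genus.
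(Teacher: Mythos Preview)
Your argument is correct and follows essentially the same route as the paper's proof: both use the splitting $TM\cong T{\mathcal F}_\omega\oplus T{\mathcal F}_\eta$ together with the identifications $N_{{\mathcal F}_\eta}\cong T{\mathcal F}_\omega$, $N_{{\mathcal F}_\omega}\cong T{\mathcal F}_\eta$, apply Proposition~\ref{p:adjontion-1} to each foliation, and compare with the ordinary adjunction formula to obtain $S\cdot S=\chi(S)+\Sigma_{{\mathcal F}_\omega}(S)+\Sigma_{{\mathcal F}_\eta}(S)\ge\chi(S)$. Your extra care in checking that the normal-bundle identifications are $J$-complex (so that Chern classes agree) is a welcome addition.
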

\begin{proof}
The tangent bundle of $M$ splits as $TM=T\mathcal{F}_{\omega} \oplus T\mathcal{F}_\eta$. Then for the normal bundles $N\mathcal{F}_\omega$ and  $N\mathcal{F}_\eta$ we have $$N\mathcal{F}_\omega \cong T\mathcal{F}_\eta, \quad N\mathcal{F}_\eta \cong T\mathcal{F}_\omega.$$
Applying Proposition \ref{p:adjontion-1} we get:
\begin{align*}
<c_1(TM), S>=&<c_1(T\mathcal{F}_{\omega}), S>+<c_1(T\mathcal{F}_\eta), S>\\
=&<c_1(N\mathcal{F}_\eta), S>+<c_1(N\mathcal{F}_\omega), S>\\
=&2\chi(S)+\Sigma_{\mathcal{F}_\eta}(S)+\Sigma_{\mathcal{F}_\omega}(S).
\end{align*}
On the other hand, by the adjuntion formula we have:
$$
<c_1(TM), S>=\chi(S) +S\cdot S .
$$
Comparing the two formulas we obtain:
$$
S\cdot S=\chi(S) +\Sigma_\mathcal{G}(S)+\Sigma_\mathcal{G}(S).
$$
Since the indices of tangency are nonnegative, we obtain
$$
S\cdot S\geq \chi(S) ,
$$
as desired.
\end{proof}

\section{Moduli spaces of $J$-holomorphic spheres}
\label{sec: moduli}
In this section we prove a generic transversality result for $J$-holomorphic curves, when $J$ 
is compatible with a symplectic pair. We denote by ${\mathcal J}(\omega, \eta)$ the set of 
$\Omega_+$-compatible almost complex structures on $M$ which make the leaves of
${\mathcal F}_\omega$ and ${\mathcal F}_\eta$ $J$-holomorphic.
\begin{lemma}
 ${\mathcal J}(\omega, \eta)$ is nonempty and contractible.
\end{lemma}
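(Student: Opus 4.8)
The plan is to adapt the classical argument showing that, on a symplectic manifold, the space of compatible almost complex structures is nonempty and contractible (see for instance \cite[Section~2.6]{MCS2}); the only new ingredient is that the auxiliary Riemannian metrics entering the fibrewise polar decomposition must be adapted to the splitting $TM = T\mathcal{F}_\omega \oplus T\mathcal{F}_\eta$. The basic observation is that this splitting is $\Omega_+$-orthogonal: if $X \in T\mathcal{F}_\omega = \ker\omega$ and $Y \in T\mathcal{F}_\eta = \ker\eta$, then $\Omega_+(X,Y) = \omega(X,Y) + \eta(X,Y) = 0$ (this is also recorded in Section~\ref{sec: preliminari}).

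First, let $\mathcal{M}$ be the space of Riemannian metrics $g$ on $M$ for which $T\mathcal{F}_\omega$ and $T\mathcal{F}_\eta$ are $g$-orthogonal. Then $\mathcal{M} \ne \varnothing$ (one block-diagonalises an arbitrary metric with respect to the splitting) and $\mathcal{M}$ is a convex subset of the space of symmetric $2$-tensors, hence contractible. Given $g \in \mathcal{M}$, let $A_g \in \op{End}(TM)$ be the invertible, $g$-skew-adjoint endomorphism defined by $\Omega_+(X,Y) = g(A_g X, Y)$, and put $\Phi(g) = (-A_g^2)^{-1/2}A_g$, the almost complex structure produced by the fibrewise polar decomposition. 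By the standard computation $\Phi(g)$ is $\Omega_+$-compatible. Moreover $\Phi(g)$ preserves the splitting: if $X \in T\mathcal{F}_\omega$, then for every $Y \in T\mathcal{F}_\eta$ we get $g(A_g X, Y) = \Omega_+(X,Y) = 0$, so $A_g X \in (T\mathcal{F}_\eta)^{\perp_g} = T\mathcal{F}_\omega$, and symmetrically $A_g(T\mathcal{F}_\eta) \subset T\mathcal{F}_\eta$; since $-A_g^2$, its positive square root and the inverse of the latter are all obtained from $A_g$ by functional calculus, $\Phi(g)$ preserves $T\mathcal{F}_\omega$ and $T\mathcal{F}_\eta$. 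Thus $\Phi$ defines a continuous map $\mathcal{M} \to \mathcal{J}(\omega, \eta)$, and in particular $\mathcal{J}(\omega, \eta) \ne \varnothing$.

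To obtain contractibility I would realise $\mathcal{J}(\omega,\eta)$ as a retract of $\mathcal{M}$. Define $\Psi \colon \mathcal{J}(\omega,\eta) \to \mathcal{M}$ by $\Psi(J)(X,Y) = \Omega_+(X, JY)$; this is a Riemannian metric since $J$ is $\Omega_+$-compatible, and it belongs to $\mathcal{M}$ because for $X \in T\mathcal{F}_\omega$ and $Y \in T\mathcal{F}_\eta$ one has $JY \in T\mathcal{F}_\eta$ and hence $\Omega_+(X, JY) = 0$. A one-line computation using the $J$-invariance of $\Omega_+$ gives $A_{\Psi(J)} = J$, whence $-A_{\Psi(J)}^2 = \mathrm{id}$ and $\Phi(\Psi(J)) = J$. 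Therefore $\Phi \circ \Psi = \mathrm{id}_{\mathcal{J}(\omega,\eta)}$, so $\mathcal{J}(\omega,\eta)$ is a retract of the contractible space $\mathcal{M}$ and is itself contractible.

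The argument has no serious obstacle; the points that require care are the elementary ones --- convexity and nonemptiness of $\mathcal{M}$, continuity of the polar decomposition in the $C^\infty$ topology on sections (a consequence of continuity of the functional calculus), and above all the fact that an adapted choice of $g$ forces $\Phi(g)$ to respect the foliation splitting, which is exactly where the $\Omega_+$-orthogonality of $\mathcal{F}_\omega$ and $\mathcal{F}_\eta$ is used. Alternatively, one can argue fibrewise: over each point the admissible complex structures split as $J = J^\omega \oplus J^\eta$ with $J^\omega$ compatible with $\eta|_{\ker\omega}$ and $J^\eta$ compatible with $\omega|_{\ker\eta}$, so the fibre is the product of the (contractible) spaces of compatible complex structures on two symplectic planes, and one concludes by the standard fact that the space of sections of a fibre bundle with contractible fibres is contractible.
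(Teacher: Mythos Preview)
Your proposal is correct. The alternative you sketch in the last paragraph --- identifying $\mathcal{J}(\omega,\eta)$ with the product $\mathcal{J}(\mathcal{F}_\omega)\times\mathcal{J}(\mathcal{F}_\eta)$ via the symplectic splitting $TM\cong T\mathcal{F}_\omega\oplus T\mathcal{F}_\eta$ --- is exactly the paper's proof, which simply cites \cite[Proposition~2.63]{MCS1} for the contractibility of each factor and is done in three lines.

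Your main argument via polar decomposition of metrics in $\mathcal{M}$ is a genuinely different route: instead of decomposing $\mathcal{J}(\omega,\eta)$ as a product of two known contractible spaces, you exhibit it directly as a retract of a convex set. This is longer but self-contained, and the key step --- that $A_g$ preserves the splitting whenever $g$ does, because the splitting is $\Omega_+$-orthogonal --- is cleanly isolated. The paper's approach is shorter and more conceptual; yours has the advantage of making explicit the map $\mathcal{M}\to\mathcal{J}(\omega,\eta)$, which is occasionally useful when one wants to deform compatible structures through adapted metrics. Either way the essential content is the same: the $\Omega_+$-orthogonality of the two characteristic distributions is what allows everything to be done blockwise.
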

\begin{proof}
Let $T{\mathcal F}_\omega$ and $T{\mathcal F}_\eta$ be the tangent distributions to 
${\mathcal F}_\omega$ and ${\mathcal F}_\eta$ respectively. They are symplectic 
sub-bundles of $TM$, and we denote by ${\mathcal J}({\mathcal F}_\omega)$ and 
${\mathcal J}({\mathcal F}_\eta)$ the sets of compatible almost complex structures on 
them, which are nonempty and contractible by \cite[Proposition~2.63]{MCS1}. Since 
$T{\mathcal F}_\omega$ and $T{\mathcal F}_\eta$ are symplectic orthogonal, there is a 
symplectic bundle isomorphism $TM \cong T{\mathcal F}_\omega \oplus T{\mathcal F}_\eta$ 
which induces a bijection between ${\mathcal J}(\omega, \eta)$ and 
${\mathcal J}({\mathcal F}_\omega) \times {\mathcal J}({\mathcal F}_\eta)$. This proves the 
lemma.
\end{proof}
Given $J \in {\mathcal J}(\omega, \eta)$, we say that a smooth map $u \colon S^2 \to M$ is 
{\em $J$-holomorphic} if 
\[ du \circ i = J \circ du, \]
where $i$ is the standard complex multiplication on $TS^2$ coming from the 
identification $S^2 \cong \C \PP^1$. For any $A \in H_2(M; \Z)$ we denote by 
$\widetilde{\mathcal M}(A, J)$ the space of $J$-holomorphic maps $u \colon S^2 \to M$ 
such that $u_*[S^2]=A$, and define the moduli space ${\mathcal M}(A, J)$ as the quotient 
of $\widetilde{\mathcal M}(A, J)$ by the group $PSL(2, \C)$ of holomorphic reparametrisations
 of $S^2$. The topology on $\widetilde{\mathcal M}(A, J)$ is the $C^\infty$-topology and the 
topology on ${\mathcal M}(A, J)$ is the quotient topology.

A $J$-holomorphic map $u \colon S^2 \to M$ is {\em multiply covered} if there is a
$J$-holomorphic map $v \colon S^2 \to M$ and a nontrivial holomorphic branched covering 
$\varphi \colon S^2 \to S^2$ such that $u = v \circ \varphi$. If $u$ is not multiply covered 
we say that it is {\em simple}. We denote by $\widetilde{\mathcal M}^s(A, J)$ the subset of
$\widetilde{\mathcal M}(A, J)$ consisting of simple maps, and by ${\mathcal M}^s(A, J)$ its 
quotient by holomorphic reparametrisations of $S^2$.

For every map $u \in W^{1,p}(S^2, M)$ with $p>2$ there is a {\em linearised 
Cauchy-Riemann operator} 
\[D_u \colon W^{1,p}(u^*TM) \to L^p(\overline{\op{Hom}}_J(TS^2, u^*TM)), \] 
where $\overline{\op{Hom}}_J(TS^2, u^*TM)$ denotes the bundle of anti-$\C$-linear 
homomorphisms from $TS^2$ to $u^*TM$; see \cite[Proposition~3.1.1]{MCS2}. By 
\cite[Theorem~C.1.10]{MCS2} $D_u$ is a Fredholm operator of index $2 \langle c_1(M), A 
\rangle +4$. 

\begin{dfn}\label{regularity 1}
An almost complex structure $J \in {\mathcal J}(\omega, \eta)$ is {\em regular} for $A \in 
H_2(A; \Z)$ if $D_u$ is surjective for every $u \in \widetilde{\mathcal M}^s(A, J)$. It is 
{\em regular} if it is regular for all $A \in H_2(M; \Z)$.
\end{dfn}
The following is a standard result in the theory of $J$-holomorphic maps: see 
\cite[Theorem~3.1.5(i)]{MCS2}.
\begin{thm}
If $J$ is regular for $A$, then
\begin{itemize}
\item ${\mathcal M}^s(A, J)$ is a smooth manifold of dimension  $2 \langle c_1(M), A \rangle 
-2$ if $\langle c_1(M), A \rangle \ge 1$, or
\item ${\mathcal M}^s(A, J) = \emptyset$ if $\langle c_1(M), A \rangle \le 0$.
\end{itemize}
\end{thm}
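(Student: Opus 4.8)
The plan is to follow the proof of \cite[Theorem~3.1.5(i)]{MCS2} essentially verbatim, the only thing worth observing being that the restriction to almost complex structures in ${\mathcal J}(\omega, \eta)$ plays no role here: regularity of $J$ for $A$ is a condition on the family of operators $\{D_u\}$ alone, and once it holds the structure of the moduli space is dictated purely by Fredholm theory.

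First I would fix $p>2$ and set up the Banach manifold ${\mathcal B}$ of maps $u \in W^{1,p}(S^2, M)$ with $u_*[S^2]=A$, together with the Banach bundle ${\mathcal E} \to {\mathcal B}$ whose fibre over $u$ is $L^p(\overline{\op{Hom}}_\C(TS^2, u^*TM))$, and the smooth section $\overline{\partial}_J u = \tfrac12(du + J \circ du \circ i)$. Its zero set consists of the $W^{1,p}$ $J$-holomorphic maps, which by elliptic regularity (\cite[Appendix~B]{MCS2}) are automatically smooth, so it coincides with $\widetilde{\mathcal M}(A,J)$. The vertical differential of $\overline{\partial}_J$ at a zero $u$ is the linearised Cauchy--Riemann operator $D_u$ of \cite[Proposition~3.1.1]{MCS2}, which is Fredholm of index $2\langle c_1(M), A\rangle +4$ by \cite[Theorem~C.1.10]{MCS2}. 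If $J$ is regular for $A$, then $D_u$ is surjective at every simple zero, so by the implicit function theorem $\widetilde{\mathcal M}^s(A,J)$ is a smooth Banach submanifold of ${\mathcal B}$ with $T_u\widetilde{\mathcal M}^s(A,J) = \ker D_u$, of dimension $2\langle c_1(M), A\rangle + 4$.

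Next I would pass to the quotient by $PSL(2,\C)$. A simple $J$-holomorphic sphere is somewhere injective, hence nonconstant, so its group of holomorphic automorphisms is trivial; therefore $PSL(2,\C)$ acts freely on $\widetilde{\mathcal M}^s(A,J)$, and the action is proper by the standard convergence-modulo-reparametrisation estimates. Consequently ${\mathcal M}^s(A,J) = \widetilde{\mathcal M}^s(A,J)/PSL(2,\C)$ is a smooth manifold of dimension $2\langle c_1(M), A\rangle + 4 - \dim_\R PSL(2,\C) = 2\langle c_1(M), A\rangle - 2$, which is the first case. For the second case, note that the infinitesimal reparametrisations inject the $6$-dimensional Lie algebra of $PSL(2,\C)$ into $\ker D_u = T_u\widetilde{\mathcal M}^s(A,J)$, the injectivity following from $u$ being nonconstant. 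Hence, if $\widetilde{\mathcal M}^s(A,J) \neq \emptyset$ and $J$ is regular for $A$, then $2\langle c_1(M), A\rangle + 4 = \dim \ker D_u \geq 6$, i.e. $\langle c_1(M), A\rangle \geq 1$. Contrapositively, $\langle c_1(M), A\rangle \leq 0$ forces $\widetilde{\mathcal M}^s(A,J) = \emptyset$ and therefore ${\mathcal M}^s(A,J) = \emptyset$.

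There is no genuine obstacle here: the content is entirely contained in the results of \cite{MCS2} already cited above. The one point worth underlining is that every analytic ingredient — the Banach bundle setup, elliptic regularity, the Fredholm property and index of $D_u$, the properness of the reparametrisation action — depends on $J$ only through $\overline{\partial}_J$ and its linearisation, so restricting to the contractible subset ${\mathcal J}(\omega, \eta)$ costs nothing. The work genuinely specific to symplectic pairs, namely showing that regular elements of ${\mathcal J}(\omega, \eta)$ actually exist, is deferred to the transversality theorem.
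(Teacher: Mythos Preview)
Your proposal is correct and matches the paper's treatment: the paper does not give a proof at all but simply cites this as a standard result, \cite[Theorem~3.1.5(i)]{MCS2}, and your outline is precisely the argument behind that reference. Your observation that the restriction to ${\mathcal J}(\omega,\eta)$ is irrelevant here --- regularity being a condition on the operators $D_u$ alone --- is exactly the point, and is why the paper feels no need to reprove the statement.
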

Regular almost complex structures are generic in the set of compatible
almost complex structures by \cite[Theorem~3.1.5(ii)]{MCS2}. However
we will need to work with almost complex structures of a more
restricted type, and therefore we will need to make some minor changes
to the statement and the proof of the basic transversality result. In
order to simplify the statement we introduce the following
terminology: we say that a property holds for a {\em generic}
$J \in {\mathcal J}(\omega, \eta)$ if it holds for every $J$ in a
countable intersection of open dense subsets.

Given $J \in {\mathcal J}(\omega, \eta)$, let $\overline{\op{Sym}}_J(T{\mathcal F}_*)$, 
for $* \in \{ \omega, \eta \}$, be the space of smooth sections $Y_*$ of  
$\op{End}(T{\mathcal F}_*)$ such that
\[ Y_* J_*+J_* Y_*=0 \quad \text{and} \quad Y_*^\dagger = Y_*, \]
where $J_*$ denotes the restriction of $J$ to ${\mathcal F}_*$ and $Y_*^\dagger$ is the adjoint of
$Y_*$ with respect to the metric on $T{\mathcal F}_*$ induced by $J_*$ and $\Omega_+$.
It is clear that, if $Y=(Y_\omega, Y_\eta) \in \overline{\op{Sym}}_J(T{\mathcal F}_\omega) 
\oplus \overline{\op{Sym}}_J(T{\mathcal F}_\eta)$, then $e^{Y}Je^{-Y} \in 
{\mathcal J}(\omega, \eta)$.

Given a sequence $\boldsymbol{\varepsilon}= (\varepsilon_n)_{n \in \N}$ with $\varepsilon_n 
\to 0$, we define Floer's $C_{\boldsymbol{\varepsilon}}$-space
$\overline{\op{Sym}}_{J, \boldsymbol{\varepsilon}}(T{\mathcal F}_*)$ as the set of all
$Y_* \in \overline{\op{Sym}}_J(T{\mathcal F}_*)$ such that
\[ \sum_{n=0}^{\infty} \epsilon_n \| Y_* \|_{C^n} < + \infty. \]
We refer to \cite[Section 6.3.2]{Aeb} and \cite[Section 4.4.1]{W2} for more details.

Floer's $C_{\boldsymbol{\varepsilon}}$-space is a separable Banach space and, for a suitable choice of
$\boldsymbol{\varepsilon}$, it contains bump sections with arbitrarily small support and 
arbitrary values at any point of $M$: see \cite[Lemma 4.52]{W2}. We fix an arbitrary almost 
complex structure $J_0 \in {\mathcal J}(\omega, \eta)$ and define 
\[{\mathcal J}_{\boldsymbol{\varepsilon}} = \{ e^Y J_0 e^{-Y} : Y \in 
\overline{\op{Sym}}_{J_0, \boldsymbol{\varepsilon}}(T{\mathcal F}_\omega) \oplus 
\overline{\op{Sym}}_{J_0, \boldsymbol{\varepsilon}}(T{\mathcal F}_\eta) \}. \]
Then ${\mathcal J}_{\boldsymbol{\varepsilon}}$ is a separable Banach manifold, admits a
continuous inclusion into ${\mathcal J}(\omega, \eta)$, and its tangent space at $J \in 
{\mathcal J}_{\boldsymbol{\varepsilon}}$ is 
\[T_J {\mathcal J}_{\boldsymbol{\varepsilon}} = \overline{\op{Sym}}_{J, 
\boldsymbol{\varepsilon}}(T{\mathcal F}_\omega) \oplus \overline{\op{Sym}}_{J, 
\boldsymbol{\varepsilon}}(T{\mathcal F}_\eta). \]

We denote by $\widetilde{\mathcal M}^{\not \subset}(A, J)$ the
subspace of {\em simple} maps in $\widetilde{\mathcal M}(A, J)$ which
are not contained in a leaf of ${\mathcal F}_{\omega}$ or
${\mathcal F}_\eta$.
\begin{prop}\label{prop: foliated regularity}
For a generic $J \in {\mathcal J}_{\boldsymbol{\varepsilon}}$, the linearised operator 
$D_u$ is surjective for every $J$-holomorphic map $u \in  
\widetilde{\mathcal M}^{\not \subset}(A, J)$.
\end{prop}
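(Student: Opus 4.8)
The plan is to adapt the standard Sard--Smale argument for generic transversality of simple $J$-holomorphic curves (as in \cite[Chapter~3]{MCS2}), paying attention to the constraint that our perturbations must preserve the splitting $TM = T{\mathcal F}_\omega \oplus T{\mathcal F}_\eta$. First I would introduce the \emph{universal moduli space}
\[ \widetilde{\mathcal M}^{\not\subset}(A, {\mathcal J}_{\boldsymbol{\varepsilon}}) = \{ (u, J) : J \in {\mathcal J}_{\boldsymbol{\varepsilon}}, \ u \in \widetilde{\mathcal M}^{\not\subset}(A, J) \}, \]
realised as the zero set of a section of an appropriate Banach bundle over $W^{1,p}(S^2, M) \times {\mathcal J}_{\boldsymbol{\varepsilon}}$, whose fibrewise zero locus is cut out by the $\overline{\partial}_J$-equation. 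The vertical differential of this section at a point $(u, J)$ is the operator
\[ (\xi, Y) \longmapsto D_u \xi + \tfrac{1}{2} Y \circ du \circ i, \]
where $D_u$ is the linearised Cauchy--Riemann operator. The core of the proof is to show that this operator is surjective whenever $u$ is simple and not contained in a leaf; granting that, the universal moduli space is a Banach manifold, the projection to ${\mathcal J}_{\boldsymbol{\varepsilon}}$ is Fredholm, and the Sard--Smale theorem produces a residual (hence, after exhausting $S^2 \times S^2 \setminus \Delta$ by compact sets and taking a countable intersection over homology classes $A$, a countable intersection of open dense) set of regular $J$; the fact that ${\mathcal J}_{\boldsymbol{\varepsilon}}$ embeds continuously in ${\mathcal J}(\omega, \eta)$ and the usual openness of the regular locus then transfer genericity back there.

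The surjectivity step is where the foliated constraint bites, so I would treat it carefully. Arguing by contradiction with the Hahn--Banach theorem, suppose the image of the above operator is a proper closed subspace; then there is a nonzero $\zeta \in L^q(\overline{\op{Hom}}_\C(TS^2, u^*TM))$ annihilating it. Pairing against $(\xi, 0)$ shows $\zeta$ lies in the kernel of the formal adjoint $D_u^*$, so by elliptic regularity $\zeta$ is smooth and, by Aronszajn's theorem, vanishes only on a discrete set. Pairing against $(0, Y)$ for all admissible $Y = (Y_\omega, Y_\eta)$ gives
\[ \int_{S^2} \langle \zeta, \, Y \circ du \circ i \rangle \, = 0 \]
for every $Y_* \in \overline{\op{Sym}}_{J, \boldsymbol{\varepsilon}}(T{\mathcal F}_*)$. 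The key point is to find, near a suitable point $z_0 \in S^2$, a bump section $Y$ supported in a small neighbourhood for which this integrand is pointwise positive, contradicting the vanishing of the integral. This requires two ingredients at $z_0$: that $du(z_0) \ne 0$ (choose $z_0$ away from the finitely many critical points, possible since $u$ is simple and nonconstant), and that the $\overline{\op{Sym}}_J$-action can produce a section with prescribed value in the relevant direction. Here the decomposition $u^*TM = u^*T{\mathcal F}_\omega \oplus u^*T{\mathcal F}_\eta$ is essential: I would choose $z_0$ so that the tangent line $du(z_0)(T_{z_0}S^2)$ is \emph{transverse} to both $T{\mathcal F}_\omega$ and $T{\mathcal F}_\eta$ at $u(z_0)$ --- which is possible because $S$ is not a leaf of either foliation, so by Lemma~\ref{tangenze discrete} (applied to both ${\mathcal F}_\omega$ and ${\mathcal F}_\eta$) the points of tangency with either foliation are finite --- and then write $du(z_0) \cdot v = a_\omega + a_\eta$ with both components nonzero; an element $Y_*$ of $\overline{\op{Sym}}_J(T{\mathcal F}_*)$ acting on the nonzero component $a_*$ can be chosen to take any value in the $J_*$-complex line spanned by $a_*$, so $Y \circ du(z_0) \circ i$ can be made to equal any prescribed element of $u^*TM$ at $z_0$, in particular one pairing positively with $\zeta(z_0) \ne 0$. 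Finally, one checks the required $Y$ can be taken in Floer's $C_{\boldsymbol{\varepsilon}}$-space, which is exactly the content of the bump-section property \cite[Lemma~4.52]{W}.

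The main obstacle I anticipate is precisely verifying that the restricted perturbations $Y = (Y_\omega, Y_\eta)$, which are forced to act \emph{block-diagonally} on $u^*TM$ rather than by arbitrary endomorphisms, still span enough directions to kill $\zeta$ at the chosen point. The standard proof uses unconstrained perturbations, so one must exploit the transversality of $du(z_0)$ to \emph{both} leaves simultaneously to guarantee both blocks of $du(z_0)$ are nonzero; the existence of such a $z_0$ is where Lemma~\ref{tangenze discrete} does its work, ruling out the degenerate possibility that $S$ is everywhere tangent to one of the distributions along the image. Once that single good point is secured, the rest is the familiar Sard--Smale machinery, and I would keep the exposition brief by citing \cite[Section~3.2, Section~3.4]{MCS2} for the parts that go through unchanged.
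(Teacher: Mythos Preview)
Your proposal is essentially the paper's own argument: universal moduli space, Hahn--Banach contradiction with a cokernel element $\zeta$, choice of a good point $z_0$ where $du$ is transverse to both $T{\mathcal F}_\omega$ and $T{\mathcal F}_\eta$ (via Lemma~\ref{tangenze discrete}), block-diagonal surjectivity of $Y \mapsto Y \circ du \circ i$ at that point, bump sections in the $C_{\boldsymbol{\varepsilon}}$-space, then Sard--Smale. The one ingredient you should make explicit is that $z_0$ must also be an \emph{injective} point of $u$, i.e.\ $u^{-1}(u(z_0))=\{z_0\}$: since $Y$ is a section over $M$ supported near $u(z_0)$, without injectivity other preimages could contribute to the integral $\int_{S^2}\langle Y(u)\circ du\circ i,\zeta\rangle$ with uncontrolled sign; the paper lists this as a separate condition on $z_0$ alongside $\zeta(z_0)\ne 0$ and transversality to the leaves, and obtains it from \cite[Proposition~2.5.1]{MCS2}.
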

\begin{proof}
The proof will follow \cite[Section~3.2]{MCS2} very closely, except that we will use Floer's
$C_{\boldsymbol{\varepsilon}}$-spaces instead of spaces of $l$ times differentiable almost 
complex structures.

We denote by ${\mathcal B}$ the set of maps $u \in W^{1,p}(S^2, M)$, with $p>2$, 
such that $u_*[S^2]=A$. This is a Banach manifold whose tangent space at $u$ is 
$W^{1,p}(u^*TM)$. We define the Banach bundle ${\mathcal E} \to {\mathcal B} \times
{\mathcal J}_{\boldsymbol{\varepsilon}}$ whose fibre at $(u, J)$ is 
\[ {\mathcal E}_{u,J}= L^p(\overline{\operatorname{Hom}}_J(TS^2, u^*TM)),\]
where the complex multiplication on $u^*TM$ is induced by $J$.

The map ${\mathcal G}(u, J)= du + J \circ du \circ i$ defines a smooth section of
${\mathcal E} \to {\mathcal B} \times {\mathcal J}_{\boldsymbol{\varepsilon}}$. We consider 
the {\em universal moduli space}
\[ \widetilde{\mathcal M}(A) = \{ (u, J) \in {\mathcal B} \times {\mathcal J}_{\boldsymbol{\varepsilon}} : 
{\mathcal G}(u, J) =0 \} \]
and its subset $\widetilde{\mathcal M}^{\not \subset}(A)$ consisting of pairs $(u, J)$ such that
$u$ is simple and is not contained in a leaf of ${\mathcal F}_\omega$ or ${\mathcal F}_\eta$.
We will prove that $\widetilde{\mathcal M}^{\not \subset}(A)$ is a smooth Banach 
submanifold of ${\mathcal B} \times {\mathcal J}_{\boldsymbol{\varepsilon}}$. 

Let $D_{u, J}{\mathcal G} \colon T_u {\mathcal B} \times T_J 
{\mathcal  J}_{\boldsymbol{\varepsilon}} \to {\mathcal E}_{(u, J)}$ be the vertical differential 
of ${\mathcal G}$, i.e. the composition of $d_{(u, J)} {\mathcal G}$ with the projection to 
the tangent space of the fibres. Then we need to show that $D_{(u, J)}{\mathcal G}$ is 
surjective if $(u, J) \in \widetilde{\mathcal M}^{\not \subset}(A)$.

Given $(\xi, Y) \in T_u {\mathcal B} \times T_J {\mathcal J}_{\boldsymbol{\varepsilon}}$, 
we have
\[ D_{(u, J)}{\mathcal G}(\xi, Y)= D_u \xi + Y(u) \circ du \circ i. \]

Now we prove that $D_{(u, J)}{\mathcal G}$ is surjective for every $(u, J) \in \widetilde{\mathcal M}^{\not \subset}(A)$. Assume, to the contrary, that the image of $D_{(u, J)}{\mathcal G}$ has positive 
codimension. Since $D_u$ is a Fredholm operator, and therefore its image is closed 
and has finite codimension, the operator $D_{(u, J)}{\mathcal G}$ has closed image.
Then there exists a nontrivial $\zeta \in L^q(\overline{\operatorname{Hom}}_J(TS^2, 
u^*TM))$  such that
\begin{equation}\label{orazio}
 \int_{S^2} \langle D_u \xi, \zeta  \rangle =0 
\end{equation}
for all $\xi \in T_u{\mathcal B}= W^{1,p}(u^*TM)$ and
\begin{equation}\label{clarabella}
\int_{S^2} \langle Y(u) \circ du \circ i, \zeta \rangle =0
\end{equation}
for every $Y \in T_J {\mathcal J}_{\boldsymbol{\varepsilon}}$. In both equations $\langle \cdot, \cdot \rangle$ denotes a
pointwise scalar product on $T^{0,1}S^2 \otimes_{\C} u^*TM$ and the integral is computed
with respect to some volume form on $S^2$.

Elliptic regularity and Equation~\eqref{orazio} imply that
$\zeta$ is smooth. Therefore, if $(u, J) \in  \widetilde{\mathcal M}^{\not \subset}(A)$ and 
$\zeta \ne 0$, then by Lemma \ref{tangenze discrete} and \cite[Proposition~2.5.1]{MCS2}
 the set ${\mathcal U}$ of the points $z \in S^2$ such that
\begin{itemize}
\item[(i)] $\zeta(z) \ne 0$,
\item[(ii)] $u^{-1}(u(z))= \{ z \}$, and
\item[(iii)] $d_{z}u(v) \not \in T_{u(z)}{\mathcal F}_\omega \cup 
T_{u(z)}{\mathcal F}_\eta$ for every $v \in T_{z}S^2 \setminus \{ 0 \}$
\end{itemize}
is open and nonempty.
 The idea of the proof is that we compensate the smaller set of almost complex 
structures with a stronger somewhere injectivity property.

We write $du \circ i = \phi_\omega + \phi_\eta$, where $\phi_* \in 
\op{Hom}_J(TS^2, T{\mathcal F}_*)$  for 
$* \in \{ \omega, \eta \}$.
By \cite[Lemma~3.2.2]{MCS2} the maps 
\begin{equation} \label{anche basta} 
\overline{\op{Sym}}_J(T_{u(z)}{\mathcal F}_*) \to 
\overline{\op{Hom}}_J(T_{z}S^2, T_{u(z)}{\mathcal F}_*), \quad \mathbf{a}_* 
\mapsto \mathbf{a}_* \circ \phi_*(z), 
\end{equation}
for $* \in \{ \omega, \eta \}$, are surjective when $\phi_*(z) \ne 0$. 

Take $z_0 \in {\mathcal U}$. Since $\zeta(z_0) \ne 0$, by the surjectivity of the maps 
\eqref{anche basta} there is
\[\mathbf{a} = 
(\mathbf{a}_\omega, \mathbf{a}_\eta) \in  
\overline{\op{Sym}}_J(T_{u(z_0)}{\mathcal F}_\omega) \oplus 
\overline{\op{Sym}}_J(T_{u(z_0)}{\mathcal F}_\eta)\] 
such that
\[\langle \mathbf{a}  \circ d_{z_0} u \circ i, \zeta(z_0) \rangle >0.\] 
By (ii) there exist a neighbourhood ${\mathcal U}_0$ of $z_0$ in ${\mathcal U}$ and 
a  neighbourhood ${\mathcal V}$ of $u(z_0)$ in $M$ such that $u^{-1}({\mathcal V}) 
\subset {\mathcal U}_0$, and $Y \in 
\overline{\op{Sym}}_{J, \boldsymbol{\varepsilon}}(T{\mathcal F}_\omega) \oplus 
\overline{\op{Sym}}_{J, \boldsymbol{\varepsilon}}(T{\mathcal F}_\eta)$, with support 
in ${\mathcal V}$ and $Y(u(z_0))= \mathbf{a}$,  such that  
$\langle Y(u(z)) \circ d_{z} u \circ i, \zeta(z) \rangle >0$ for all 
$z \in {\mathcal U}_0$. Then
$$\int_{S^2}  \langle Y(u) \circ du \circ i, \zeta \rangle >0,$$ 
contradicting Equation~\eqref{clarabella}. This proves that
$\zeta(z_0)=0$, contradicting $z_0 \in {\mathcal U}$, and thus a section 
$\zeta \in L^q(\overline{\operatorname{Hom}}_J(TS^2, u^*TM))$ satisfaying
Equations \eqref{orazio} and \eqref{clarabella} vanishes everywhere.
 This proves that
$D_{(u, J)}{\mathcal G}$ is surjective whenever
$(u, J) \in \widetilde{\mathcal M}^{\not \subset}(A)$, and therefore
the universal moduli space $\widetilde{M}^{\not \subset}(A)$ is a
Banach manifold.

From now on, the proof procedes as in the proof of \cite[Theorem~3.1.5(ii)]{MCS2}: 
if $J$ is a regular value of the projection $\pi \colon \widetilde{\mathcal M}^{\not \subset}(A) 
\to {\mathcal  J}_{\boldsymbol{\varepsilon}}$, for every $u \in \widetilde{\mathcal M}^{\not \subset}(A, J)$ the
linearised Cauchy-Riemann operator $D_u$ is surjective. By Sard-Smale
theorem, the regular values of $\pi$ are generic in ${\mathcal J}_{\boldsymbol{\varepsilon}}$.
\end{proof}

\begin{rem}
The conclusion of Proposition~\ref{prop: foliated regularity} also holds for $J$-holomorphic 
maps from higher genus Riemann surfaces. The proof needs only the standard modifications 
to take into account the variations of the complex structure at the source.
\end{rem}

Now we consider $J$-holomorphic maps whose image is everywhere tangent to a leaf of a characteristic foliation. 
\begin{lemma}\label{spheres in leaves}
Let $u \colon S^2 \to M$ be a simple $J$-holomorphic map. If the image of $u$ is contained 
in a leaf of ${\mathcal F}_*$, $* \in \{ \eta, \omega \}$, then $u$ parametrises a leaf of 
${\mathcal F}_*$. 
\end{lemma}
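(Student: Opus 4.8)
The plan is to reduce to the classical fact that a simple $J$-holomorphic sphere in a four-dimensional symplectic manifold which is contained in a codimension-zero symplectic submanifold parametrises that submanifold, applied fibrewise to the leaf. Suppose the image of $u$ is contained in a leaf $L$ of ${\mathcal F}_*$. Since $J$ preserves $T{\mathcal F}_*$ and $\Omega_+$ restricts to a symplectic form on $L$, the restriction $J|_L$ is an $\Omega_+|_L$-compatible almost complex structure on the two-dimensional symplectic manifold $L$, and $u \colon S^2 \to L$ is $J|_L$-holomorphic. So it suffices to show that a non-constant $J$-holomorphic map from $S^2$ to a surface is a branched cover of a connected component of that surface, and then invoke simplicity and the identity theorem to upgrade ``branched cover of a component'' to ``parametrises the component, which is a leaf.''

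First I would record that $u$ is non-constant: a constant map is multiply covered in the convention used here (or, more carefully, is excluded because simple maps are by definition non-constant in \cite{MCS2}; alternatively one notes a constant map carries the trivial homology class and there is nothing to prove). Next I would observe that $u$ lands in a single leaf $L$ by hypothesis, and that $L$ is an (immersed, but for the local argument we may pass to a foliated chart so it is embedded) surface with $J|_{TL}$ well-defined since $J(T{\mathcal F}_*) = T{\mathcal F}_*$. Then I would apply the structure theorem for $J$-holomorphic curves in the complex-one-dimensional target: by \cite[Appendix~E]{MCS2} (or directly by the fact that in real dimension two any $\Omega_+|_L$-compatible $J$ is integrable and the Riemann mapping / uniformisation picture applies), a non-constant $J|_L$-holomorphic map $u\colon S^2 \to L$ is holomorphic for the induced complex structure, hence, since $S^2$ is compact, its image is a compact complex curve in the (open) Riemann surface $L$; but $L$ is one-dimensional, so the image is an open and closed subset, i.e. a connected component of $L$, and $u$ is a branched covering of it.

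Finally I would combine this with the simplicity hypothesis: a branched covering $u\colon S^2 \to C$ of a Riemann surface $C$ which is not injective on a dense set factors as $v \circ \varphi$ with $\varphi\colon S^2 \to S^2$ a holomorphic branched cover of degree $>1$ and $v\colon S^2\to C$; this is precisely the statement that $u$ is multiply covered, contradicting simplicity unless $\deg\varphi = 1$, i.e. unless $u$ itself is a biholomorphism onto $C$. Since $C$ is a connected component of the leaf $L$ of ${\mathcal F}_*$ and the leaves of a foliation are connected, $C = L$, so $u$ parametrises a leaf. The only real subtlety is the step asserting that a simple non-injective branched cover is multiply covered in the precise sense of the definition given earlier; this follows because such a branched cover $S^2 \to C$ with $C$ of genus zero must itself factor through a lower-degree cover of $C$ (using that $\pi_1(S^2)$ is trivial, every connected covering of $S^2\setminus(\text{branch points})$ that is non-trivial is disconnected, forcing the branched cover to be a genuine composition), and for positive-genus leaves there are no non-constant maps from $S^2$ at all, so the image would be a point, again contradicting that $u$ is simple; in fact a cleaner route is to invoke \cite[Proposition~2.5.1]{MCS2} (already cited in the excerpt) which says a simple $J$-holomorphic curve is somewhere injective, hence here an embedding near a generic point, hence an embedded sphere inside $L$, which being open-and-closed must be all of $L$.
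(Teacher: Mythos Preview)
Your argument is correct and follows essentially the same route as the paper: restrict $u$ to the leaf as a holomorphic map between Riemann surfaces, observe it is a branched covering onto a compact leaf, and use simplicity to force degree one. The only substantive difference is how compactness of the leaf is established: you use the open mapping theorem (the image of a nonconstant holomorphic map $S^2 \to L$ is open and closed, hence all of the connected leaf $L$, which is therefore compact), whereas the paper argues homologically that a noncompact leaf $F$ has $H_2(F)=0$, so $u_*[S^2]=0$ in $H_2(M)$, forcing $u$ to have zero $\Omega_+$-energy and hence to be constant. Both arguments are valid; yours is more elementary, the paper's is shorter and avoids the mild subtlety of lifting $u$ to the leaf topology before invoking the open mapping theorem. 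Your final paragraph is more elaborate than necessary: once $L$ is compact and $u\colon S^2 \to L$ is a branched cover, Riemann--Hurwitz already forces $L\cong S^2$, and then simplicity immediately gives degree one.
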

\begin{proof}
Suppose then that the image of $u$ is contained in a leaf $F$. If $F$ is a noncompact leaf, 
then $u_*[S^2] =0$ in $H_2(M)$ because $H_2(F)=0$, and therefore $u$ is constant.
Then $F$ is compact and therefore $u \colon S^2 \to F$ is a holomorphic branched covering.
Since $u$ is simple, the covering has degree one, and therefore $u$ is an embedding.
\end{proof}

The Reeb stability theorem will play a crucial role in dealing with $J$-holomorphic spheres which are tangent to a characteristic foliation, so we recall its statement.

\begin{thm}[Reeb stability theorem -- see {\cite[Theorem 2.4.3]{CC}}] \label{teorema 
del nonno}
If $L$ is a compact leaf  of a foliated manifold $(M, \mathcal{F})$ and if $L$ is diffeomorphic
to a sphere $S^k$, $k \ge 2$, then there is a foliated neighbourhood $V \subset M$ 
containing $L$ such that $V \cong S^k \times D^{n-k}$ and ${\mathcal F}|_{V}$ is the 
foliation by spheres induced by the product.
\end{thm}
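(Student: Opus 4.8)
The plan is to reduce the statement to a computation of the holonomy of $L$ and then to reconstruct the foliation near $L$ from that holonomy. Write $m=n-k$ for the codimension. The starting observation is that, since $k\ge 2$, the sphere $S^k\cong L$ is simply connected, so $\pi_1(L)=1$. Because the holonomy of a compact leaf is a homomorphism defined on $\pi_1(L)$ --- it sends the class of a loop based at a point $x_0\in L$ to the germ at $x_0$ of the first-return diffeomorphism of a transversal --- the holonomy of $L$ is trivial. This is the essential input, and everything that follows is a matter of turning ``trivial holonomy'' into a product structure.

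First I would fix $x_0\in L$ together with a transversal $T\cong D^m$ through $x_0$, and cover the compact leaf $L$ by finitely many foliation charts (flow boxes) in which $\mathcal F$ is the product foliation by plaques. In each chart the transverse coordinate identifies a piece of a transversal, and on overlaps these identifications differ by the holonomy transition maps. Holonomy transport along a path $\alpha\subset L$ from $x_0$ to a point $x$ then produces a germ of diffeomorphism from $T$ to a transversal at $x$, and this germ depends only on the homotopy class of $\alpha$ rel endpoints. Since $\pi_1(L)=1$, it depends only on the endpoint $x$. This is exactly the step where simple connectivity is used, and it is the crux of the whole argument.

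Using this unambiguous transport I would, for each $t\in T$, define a map $\sigma_t\colon L\to M$ sending $x\in L$ to the point over $x$ obtained by transporting $t$ along any path from $x_0$ to $x$; well-definedness is guaranteed by the previous paragraph. Compactness of $L$ (finitely many charts, with a common shrunken transversal) ensures that there is a single radius for which every $\sigma_t$ with $t\in T\cong D^m$ is defined on all of $L$. Each $\sigma_t$ is then a smooth embedding whose image $L_t$ is a compact leaf diffeomorphic to $L\cong S^k$, the leaves $L_t$ are pairwise disjoint, and $L_0=L$.

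Finally I would assemble these into the product. The map $\Phi\colon S^k\times D^m\to M$ given by $\Phi(x,t)=\sigma_t(x)$ is injective because the leaves $L_t$ are disjoint and a local diffeomorphism because each $L_t$ is transverse to the fibres $\{x\}\times D^m$; hence it is a diffeomorphism onto an open neighbourhood $V$ of $L$, and by construction it carries the product foliation by the slices $S^k\times\{t\}$ onto $\mathcal F|_V$. The main obstacle is genuinely the global well-definedness of holonomy transport in the second step: for a general compact leaf with finite holonomy one obtains only that nearby leaves are finite covers of $L$ and that $V$ is a flat bundle over $L$ --- the general local Reeb stability statement --- rather than a product. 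The simple connectivity forced by $k\ge 2$ is precisely what trivialises the holonomy and upgrades this conclusion to the product structure claimed.
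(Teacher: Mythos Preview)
The paper does not give its own proof of this statement: it is quoted as the Reeb stability theorem with a reference to \cite[Theorem~2.4.3]{CC}, so there is nothing to compare against. Your argument is the standard one and is correct: the key point is exactly that $\pi_1(S^k)=1$ for $k\ge 2$ forces the holonomy of $L$ to be trivial, after which the local Reeb stability theorem (compact leaf with finite---here trivial---holonomy) yields the product neighbourhood you describe.
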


We will use the automatic transversality result of Hofer, Lizan and Sikorav to deal with 
$J$-holomorphic maps $u \colon S^2 \to M$ whose image in contained in a leaf of either
${\mathcal F}_\omega$ or ${\mathcal F}_\eta$. The following theorem is a reformulation of
\cite[Theorem~1]{HLS}.
\begin{thm} \label{thm: automatic transversality}
Let $u \colon S^2 \to M$ be a $J$-holomorphic map for some almost complex structure $J$. If
$u$ is an embedding and $u_*[S^2]=A$ with $A \cdot A \ge -1$, then $D_u$ is surjective.
\end{thm}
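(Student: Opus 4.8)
The plan is to deduce Theorem~\ref{thm: automatic transversality} from the automatic transversality criterion of Hofer--Lizan--Sikorav \cite[Theorem~1]{HLS}, which says that for an immersed $J$-holomorphic sphere $u$ with normal bundle $N_u$ of degree $d$, the operator $D_u$ is surjective as soon as $d \ge -1$ (more precisely, the criterion is phrased in terms of the generalized normal bundle and the adjunction defect, but for an \emph{embedded} curve the two notions coincide). So the only thing to check is that for an embedded sphere representing a class $A$, the degree of the normal bundle equals $A \cdot A$, and that $A \cdot A \ge -1$ is exactly the hypothesis we are given.

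First I would recall that if $u \colon S^2 \to M$ is an embedding with image $S = u(S^2)$, then $u^*TM$ splits $J$-holomorphically as $TS^2 \oplus N$, where $N \to S^2$ is (the pullback of) the normal bundle of $S$ in $M$. The degree of $N$ is by definition the self-intersection number $S \cdot S = A \cdot A$. Since $u$ is an embedding, $S$ is a smoothly embedded surface and there is no adjunction correction: the ``normal bundle'' appearing in the Hofer--Lizan--Sikorav criterion is literally $N$, and its degree is $A \cdot A$. Hence the hypothesis $A \cdot A \ge -1$ is precisely the inequality $\deg N \ge -1$ required by \cite[Theorem~1]{HLS}, and we conclude that $D_u$ is surjective.

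The mild subtlety — and the one place one should be careful — is matching conventions: \cite{HLS} work with an immersed curve and state their result for the \emph{normalized} Cauchy--Riemann operator acting on sections of the normal bundle, whereas $D_u$ here acts on $W^{1,p}(u^*TM)$. Standard splitting arguments (see \cite[Section~3.3]{MCS2}) show that $D_u$ is surjective if and only if the induced operator on the normal bundle is surjective, because the tangential part is always surjective for spheres (the relevant operator there is $\bar\partial$ on $\mathcal{O}(2)$, which has no cokernel). So I would simply note this splitting, invoke surjectivity of the tangential part, and reduce to the normal operator, to which \cite[Theorem~1]{HLS} applies verbatim. There is no real obstacle here; the statement is genuinely a ``reformulation'' as the text says, and the proof is a two-line bookkeeping argument once the correspondence between $A \cdot A$ and $\deg N_u$ is made explicit.
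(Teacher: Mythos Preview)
Your proposal is correct and matches the paper's approach: the paper does not actually prove this theorem but simply presents it as a reformulation of \cite[Theorem~1]{HLS}, so your argument---identifying $\deg N_u$ with $A \cdot A$ for an embedded sphere and invoking the tangential/normal splitting of $D_u$---is exactly the bookkeeping one needs to justify that reformulation. There is nothing to add.
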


\begin{cor}\label{regularity 2}
A generic $J \in {\mathcal J}(\omega, \eta)$ is regular.
\end{cor}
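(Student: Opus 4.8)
The plan is to combine the two transversality results already established --- Proposition~\ref{prop: foliated regularity} for simple maps not contained in a leaf, and Theorem~\ref{thm: automatic transversality} for maps parametrising a leaf --- and to show that together they force $D_u$ to be surjective for \emph{every} simple $J$-holomorphic sphere, not just those realising a fixed class $A$. First I would fix a sequence $\boldsymbol{\varepsilon}$ as in the discussion preceding Proposition~\ref{prop: foliated regularity}, so that $\mathcal{J}_{\boldsymbol{\varepsilon}}$ contains bump sections with arbitrarily small support, and recall that $\mathcal{J}_{\boldsymbol{\varepsilon}} \subset \mathcal{J}(\omega, \eta)$ continuously. For each class $A \in H_2(M; \Z)$ the set
\[ \mathcal{J}^{\text{reg}}_{\boldsymbol{\varepsilon}}(A) = \{ J \in \mathcal{J}_{\boldsymbol{\varepsilon}} : D_u \text{ is surjective for all } u \in \widetilde{\mathcal{M}}^{\not\subset}(A, J) \} \]
is, by Proposition~\ref{prop: foliated regularity} and the Sard--Smale argument in its proof, the complement of a meagre set; more precisely it contains a countable intersection of open dense subsets of $\mathcal{J}_{\boldsymbol{\varepsilon}}$. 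Intersecting over the countably many classes $A$, the set of $J \in \mathcal{J}_{\boldsymbol{\varepsilon}}$ that are regular for \emph{every} simple sphere not lying in a leaf is itself a countable intersection of open dense subsets, hence generic in the sense fixed in the paper.

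Next I would dispose of the simple spheres whose image \emph{is} contained in a leaf. By Lemma~\ref{spheres in leaves}, such a map is either constant or an embedded parametrisation of a compact leaf $F$ of $\mathcal{F}_\omega$ or $\mathcal{F}_\eta$. A compact leaf $F$ diffeomorphic to $S^2$ has, by Reeb stability (Theorem~\ref{teorema del nonno}), a product foliated neighbourhood $S^2 \times D^2$, so its self-intersection is $F \cdot F = 0 \ge -1$. Hence Theorem~\ref{thm: automatic transversality} applies to $u$ directly and $D_u$ is surjective --- and this holds for \emph{any} $J \in \mathcal{J}(\omega, \eta)$, with no genericity needed. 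Constant maps are not in $\widetilde{\mathcal{M}}^s$ in the relevant sense (or are handled trivially since their linearised operator is the standard $\bar\partial$ on a trivial bundle, which is surjective for the sphere). Combining the two cases: for $J$ in the generic set constructed above, $D_u$ is surjective for every simple $J$-holomorphic sphere representing any class, which is exactly Definition~\ref{regularity 1}, so such $J$ is regular.

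Finally I would transfer genericity from $\mathcal{J}_{\boldsymbol{\varepsilon}}$ to $\mathcal{J}(\omega, \eta)$. This is the standard Taubes-type trick: one shows that the property ``$J$ is regular for $A$'' is open in $\mathcal{J}(\omega, \eta)$ (because $\widetilde{\mathcal{M}}^s(A, J)$ is compact modulo bubbling once an energy bound is imposed, and surjectivity of Fredholm operators is an open condition) and that regular structures are dense in $\mathcal{J}(\omega, \eta)$ (because $\mathcal{J}_{\boldsymbol{\varepsilon}}$ is dense in $\mathcal{J}(\omega, \eta)$ in the relevant topology and contains regular structures by the previous paragraph). Intersecting over $A$ again gives that regular $J$ form a countable intersection of open dense subsets of $\mathcal{J}(\omega, \eta)$, as claimed. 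I expect the main obstacle to be the openness/density argument in this last step --- specifically, checking that the $C_{\boldsymbol{\varepsilon}}$-space is large enough to be $C^\infty$-dense in $\mathcal{J}(\omega, \eta)$ and that the compactness needed for openness survives the constraint that the leaves stay $J$-holomorphic; both points are essentially in \cite[Section~3.2]{MCS2} but need the minor adaptations already flagged in the proof of Proposition~\ref{prop: foliated regularity}.
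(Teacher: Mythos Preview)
Your proposal is correct and follows essentially the same approach as the paper: automatic transversality via Reeb stability handles spheres in leaves, Proposition~\ref{prop: foliated regularity} plus a countable intersection over $A$ handles the rest within $\mathcal{J}_{\boldsymbol{\varepsilon}}$, and a Taubes-type argument transfers genericity to $\mathcal{J}(\omega,\eta)$. For the density step you flag as a potential obstacle, the paper sidesteps the question of whether a fixed $\mathcal{J}_{\boldsymbol{\varepsilon}}$ is $C^\infty$-dense by simply noting that the base point $J_0$ in its definition was arbitrary, so every $J_0 \in \mathcal{J}(\omega,\eta)$ is a $C^\infty$-limit of regular structures in its own $\mathcal{J}_{\boldsymbol{\varepsilon}}$.
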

\begin{proof}
First we observe that a simple $J$-holomorphic maps $u \colon S^2 \to M$ with values in a 
leaf of either ${\mathcal F}_\eta$ or ${\mathcal F}_\omega$ satisfies automatic transversality. 
In fact, by Lemma~\ref{spheres in leaves} $u$ parametrises a leaf $F$ and by Reeb's 
stability theorem \ref{teorema del nonno} $F \cdot F =0$. Then Theorem~\ref{thm: 
automatic transversality} implies that $D_u$ is surjective. This together with Proposition 
\ref{regularity 1} implies that a generic almost complex structure in 
${\mathcal J}_{\boldsymbol{\varepsilon}}$ is regular for $A$, for any 
$A \in H_2(M; \Z)$. Since  genericity is preserved by countable intersections, it 
follows that a generic almost complex structure in ${\mathcal J}_{\boldsymbol{\varepsilon}}$ 
is regular. In particular this implies that the almost complex structure $J_0$ used to define
${\mathcal J}_{\boldsymbol{\varepsilon}}$ can be approximated by regular almost complex 
structures in the $C^{\infty}$ topology. Since $J_0$ was chosen arbitrarily, this proves that
regular almost complex structures are dense in ${\mathcal J}(\omega, \eta)$. Finally, using an argument due to Taubes (and explained in detail in \cite[Section 4.4.2]{W2}) we 
conclude that a generic almost complex structure in ${\mathcal J}(\omega, \eta)$ is generic.
\end{proof}

\section{Proof of the main theorems}\label{sec: main}
In this section we prove the main theorems of the article.
\begin{proof}[Proof of Theorem~\ref{pippo}]
Let $S\hookrightarrow (M, \Omega_+)$ be an embedded symplectic sphere with 
$S \cdot S=-1$. By \cite[Theorem~5.1]{W} and Corollary~\ref{regularity 2} there exists 
a generic $J\in \mathcal{J}(\omega, \eta)$ for which $S$ is isotopic to the image of a 
$J$-holomorphic embedding $u \colon S^2 \to M$. From now on we denote by $S$ the image 
of $u$.

By the Reeb stability Theorem~\ref{teorema del nonno},  if $S$ were a leaf of either 
${\mathcal F}_\omega$ of ${\mathcal F}_\eta$, then it would satisfy $S\cdot S=0$.
Since $S$ is not a leaf, we are in position to apply Proposition \ref{p:adjontion-1}, which gives
$$S\cdot S\geq \chi(S)=2.$$
Then $S$ cannot have self-intersection $-1$ and therefore $M$ is minimal.
\end{proof}

\begin{proof}[Proof of Theorem~\ref{pluto}(i)]
By Theorem \ref{pippo} $(M, \Omega_+)$ is minimal. If $S$ is an embedded symplectic 
sphere in $(M, \Omega_+)$ with $S \cdot S=0$, then by \cite{MC} (see also \cite{W} for a 
more modern treatment) for every $\Omega_+$-compatible regular almost complex 
structure $J$, there is a fibration 
$$\pi \colon M \to \Sigma$$ 
over a surface $\Sigma$ whose fibres are $J$-holomorphic spheres which are isotopic to $S$. 
By Corollary~\ref{regularity 2} we can assume that $J \in {\mathcal J}(\omega, \eta)$.

Suppose now that $S$ is a fibre of $\pi$. Since it violates the inequality~\eqref{main 
inequality}, it must be a leaf of either ${\mathcal F}_\omega$ or ${\mathcal F}_\eta$. We 
will assume without loss of generality that it is a leaf of ${\mathcal F}_\omega$.
Let us consider the following subset of $\Sigma$:
$$
X=\{x\in \Sigma\, | \,\pi^{-1}(x)\; \text{is a leaf of $\mathcal{F}_\omega$}\}.
$$
Since $S$ is both a leaf and a fibre, $X$ is non empty. We 
shall prove that $X$ is open and closed. 

To prove that it is closed, let us consider a sequence $\{x_n\}$ in $X$ converging to $x \in 
\Sigma$, and $F_n=\pi^{-1} (x_n)$. Since $F_n$ is a leaf for all $n$, we have that 
$\omega_{|TF_n}=0$. By a limiting argument we obtain $\omega|_{TF}=0$,
which means that the fibre over $x$ is a leaf of ${\mathcal F}_\omega$.

Now we prove that $X$ is open. For $x\in X$, the preimage $F=\pi^{-1}(x)$ is both a 
fibre of $\pi$ and a leaf of $\mathcal{F}_\omega$. Since $F$ is a sphere, by Reeb's Stability 
Theorem~\ref{teorema del nonno} there it is an open foliated neighbourhood $U$ of $F$
where every leaf is a sphere. 
Let  $V$ be an open neighbourhood of $x$ in $\Sigma$ such that $\pi^{-1}(V)\subset U$.

Consider $y\in V$ and $z\in \pi^{-1}(y)=S_y$. Let $F_z$ be the leaf of $\mathcal{F}$ passing 
through $z$. We know that $F_z$ is a $J$-holomorphic sphere. By McDuff's positivity of 
intersection~\cite[Theorem~E.1.5]{MCS2}, we have either $F_z=S_y$ or $S_y 
\cdot F_z>0$ because $F_z$ and  $S_y$ intersect in $z$.

But both $S_y$ and $F_z$ have the same homology class of $F$ and then we have:
$$
F_z \cdot S_y =F\cdot F=0.
$$

We conclude from this that $F_z=S_y$, and therefore $y \in X$ for all $y\in V$. Then $X$ is 
open, so we have $X=\Sigma$.
\end{proof}

\begin{proof}[Proof of Theorem~\ref{pluto}(ii)]
The symplectic manifold $(M, \Omega_+)$ is minimal by Theorem~\ref{pippo}. 
If $S \cdot S >0$, then \cite[Corollary~1.6]{MC} implies that $(M, \Omega_+)$ is
symplectomorphic to either $\C\PP^2$ or $S^2 \times S^2$ with a product symplectic form.
Since $\C\PP^2$ carries no symplectic pair because $b_-(\C \PP^2)=0$, only the latter 
possibility remains. The same arguments of the proof of (i) applied to the spheres $S^2 \times 
\{ * \}$ and $\{ * \} \times S^2$ show that the foliations ${\mathcal F}_\omega$ and 
${\mathcal F}_\eta$ are given by the product structure. 
\end{proof}
Before proving Theorem \ref{paperino}, we recall two well known results.
\begin{lemma} \label{classification of sphere bundles}
For every closed, oriented surface $\Sigma$ there are exactly two oriented $S^2$-bundles 
over $\Sigma$ up to isomorphism, and they have non diffeomorphic total spaces.
\end{lemma}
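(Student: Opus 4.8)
The plan is to treat the two assertions separately: first count the oriented $S^2$-bundles over $\Sigma$ by obstruction theory, and then distinguish the two candidate total spaces by an elementary homological invariant. For the count, I would invoke Smale's theorem $\op{Diff}^+(S^2) \simeq SO(3)$, so that isomorphism classes of oriented $S^2$-bundles over $\Sigma$ are in bijection with $[\Sigma, BSO(3)]$. Since $SO(3) \simeq \R\PP^3$, one has $\pi_1(BSO(3)) = 0$, $\pi_2(BSO(3)) \cong \Z/2$ and $\pi_3(BSO(3)) = 0$, so the map from $BSO(3)$ to its second Postnikov stage $K(\Z/2, 2)$ is $3$-connected. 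As $\Sigma$ has the homotopy type of a $2$-dimensional CW complex, this gives a bijection $[\Sigma, BSO(3)] \cong [\Sigma, K(\Z/2,2)] = H^2(\Sigma;\Z/2) \cong \Z/2$. Hence there are exactly two isomorphism classes: the trivial bundle $\Sigma \times S^2$ and a single nontrivial one, distinguished by the second Stiefel--Whitney class.

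To get concrete models, I would note that $SO(3) \cong PU(2)$ and that the obstruction to lifting a $PU(2)$-bundle over $\Sigma$ to a $U(2)$-bundle lies in $H^3(\Sigma;\Z) = 0$; hence every oriented $S^2$-bundle over $\Sigma$ is the projectivisation $\PP(V)$ of a topological rank-two complex vector bundle $V \to \Sigma$. Such bundles are classified over the surface $\Sigma$ by $c_1(V) \in H^2(\Sigma;\Z) \cong \Z$, and $\PP(V) \cong \PP(V \otimes L)$ for every line bundle $L$; from this one checks that $\PP(V)$ is the trivial bundle if and only if $c_1(V)$ is even, since twisting a $V$ with $c_1(V) = 2m$ by a line bundle of degree $-m$ produces a bundle with vanishing first Chern class, hence topologically trivial over the $2$-complex $\Sigma$.

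Finally, to see that the two total spaces $M = \PP(V)$ are not diffeomorphic, I would compare their intersection forms. By Leray--Hirsch, $H^*(M;\Z)$ is free of rank two over $H^*(\Sigma;\Z)$ on $1$ and $c_1(\mathcal{O}(1))$, so $H_2(M;\Z) \cong \Z^2$, generated by the fibre class $F$ and the class $S$ of a section — a section exists since the obstruction to a nowhere-zero section of $V$ is its Euler class in $H^3(\Sigma;\Z) = 0$. One has $F\cdot F = 0$ and $F \cdot S = 1$, while if $S$ comes from a line subbundle $L \subset V$ then $S \cdot S = \deg(V/L) - \deg L \equiv c_1(V) \pmod 2$; a base change shows $\{F,S\}$ is a basis. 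Thus the intersection form is $\begin{pmatrix} 0 & 1 \\ 1 & k \end{pmatrix}$ with $k \equiv c_1(V) \pmod 2$: even (in fact realised with $k=0$, hence hyperbolic) for the trivial bundle and odd for the nontrivial one. An even unimodular form is never isomorphic to an odd one, so the two total spaces are not even homotopy equivalent, a fortiori not diffeomorphic. (Equivalently, one can compute $w_2(TM) = \pi^*(c_1(V) \bmod 2)$, so that $\Sigma \times S^2$ is spin while the nontrivial bundle is not.)

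No genuinely hard step arises — the result is classical — and the main routine points that still need care are the sign conventions in the Grothendieck relation computing $S\cdot S$, the Leray--Hirsch identification of $H_2(M;\Z)$, and the verification that $\PP(V)$ trivialises precisely when $c_1(V)$ is even.
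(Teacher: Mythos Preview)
The paper does not actually prove this lemma: it is introduced with the phrase ``we recall two well known results'' and is stated without argument, so there is no proof in the paper to compare your proposal against. Your proof is a correct and standard treatment of this classical fact. The classification via $[\Sigma, BSO(3)] \cong H^2(\Sigma;\Z/2)$ using Smale's theorem is clean, and the intersection-form (equivalently, spin/non-spin) argument is the usual way to distinguish the two total spaces.

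One small slip: the Euler class of a rank-two \emph{complex} bundle $V$ lies in $H^4(\Sigma;\Z)$, not $H^3$; either way it vanishes on a surface, so $V$ admits a nowhere-zero section and hence a line subbundle, and your conclusion stands. Alternatively, one can argue directly that the $S^2$-bundle $\PP(V)\to\Sigma$ admits a section because the fibre is simply connected and $\Sigma$ is a $2$-complex.
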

%
%
The symplectic pairs in Theorem \ref{paperino} will be constructed via flat $SO(3)$-bundles.
Isomorphisms classes of flat $SO(3)$-bundles over a surface 
$\Sigma$ are in bijection with the conjugacy classes of representations $\rho \colon 
\pi_1(\Sigma) \to SO(3)$. 
\begin{lemma}\label{triviality criterion}
The flat bundle with holonomy representation $\rho \colon \pi_1(\Sigma) \to SO(3)$ is trivial
if and only if $\rho$ can be lifted to a representation $\tilde{\rho} \colon \pi_1(\Sigma) \to
SU(2)$.
\end{lemma}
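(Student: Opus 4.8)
The plan is to recognise both conditions as the vanishing of one and the same $\Z/2\Z$-valued obstruction: the obstruction to lifting $\rho$ through the central extension
\begin{equation*}
1 \longrightarrow \Z/2\Z \longrightarrow SU(2) \longrightarrow SO(3) \longrightarrow 1 ,
\end{equation*}
which coincides with the second Stiefel--Whitney class of the associated flat bundle $E_\rho = \widetilde{\Sigma} \times_\rho \R^3$, where $\widetilde\Sigma$ is the universal cover of $\Sigma$. If $\Sigma$ has genus zero then $\pi_1(\Sigma) = 1$ and both statements hold trivially, so from now on I assume that $\Sigma$ has positive genus $g$; then $\Sigma$ is aspherical, so $H^2(\pi_1(\Sigma);\Z/2\Z) \cong H^2(\Sigma;\Z/2\Z) \cong \Z/2\Z$, and I fix the standard presentation $\pi_1(\Sigma) = \langle a_1,b_1,\dots,a_g,b_g \mid \prod_{i=1}^g [a_i,b_i] = 1 \rangle$.

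One implication is elementary and does not even use obstruction theory. If $\rho$ lifts to $\tilde\rho \colon \pi_1(\Sigma) \to SU(2)$, consider the flat principal $SU(2)$-bundle $P_{\tilde\rho} = \widetilde\Sigma \times_{\tilde\rho} SU(2)$. Since $SU(2) \cong S^3$ is $2$-connected, $BSU(2)$ is $3$-connected, so every $SU(2)$-bundle over the at most two-dimensional surface $\Sigma$ is trivial; hence $P_{\tilde\rho}$ is trivial, and therefore so is the associated bundle $\widetilde\Sigma \times_{\tilde\rho} \R^3 = E_\rho$, where $SU(2)$ acts on $\R^3$ through $SO(3)$.

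For the converse I would argue by obstruction theory, evaluated on the fundamental class. Choosing arbitrary lifts $\hat\alpha_i,\hat\beta_i \in SU(2)$ of $\rho(a_i),\rho(b_i)$, the element
\begin{equation*}
\varepsilon(\rho) = \prod_{i=1}^g [\hat\alpha_i,\hat\beta_i] \in \ker\bigl(SU(2)\to SO(3)\bigr) = \{\pm\mathbf{1}\} \cong \Z/2\Z
\end{equation*}
is independent of the chosen lifts because $\{\pm\mathbf{1}\}$ is central, and $\rho$ extends over the generators to a homomorphism into $SU(2)$ exactly when $\varepsilon(\rho) = \mathbf{1}$; hence $\rho$ lifts to $SU(2)$ if and only if $\varepsilon(\rho) = 0$. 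On the other hand $SO(3)$-bundles over the closed surface $\Sigma$ are classified by $w_2 \in H^2(\Sigma;\Z/2\Z)$ --- the only obstruction being in degree two because $\Sigma$ is a two-complex, in agreement with Lemma~\ref{classification of sphere bundles} --- so $E_\rho$ is trivial if and only if $\langle w_2(E_\rho),[\Sigma]\rangle = 0$. The point is then the identity $\langle w_2(E_\rho),[\Sigma]\rangle = \varepsilon(\rho)$: evaluating the \v{C}ech cocycle representing $w_2$ of the \emph{flat} bundle $E_\rho$ on the unique $2$-cell of the standard CW structure on $\Sigma$, whose attaching word is $\prod_i[a_i,b_i]$, gives precisely $\varepsilon(\rho)$. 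Combining, $E_\rho$ is trivial if and only if $\varepsilon(\rho) = 0$, if and only if $\rho$ lifts to $SU(2)$.

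The step I expect to be the real obstacle is this last identity $\langle w_2(E_\rho),[\Sigma]\rangle = \varepsilon(\rho)$, i.e. the assertion that the topological characteristic class of a flat bundle over a surface is computed by lifting the holonomy of the relator into $SU(2)$. This is a classical mod $2$ analogue of Milnor's computation of Euler numbers of flat bundles over surfaces; in the write-up I would either cite it or carry out the short \v{C}ech/simplicial calculation directly, using that the transition functions of $E_\rho$ can be taken locally constant with values prescribed by $\rho$.
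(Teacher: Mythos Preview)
The paper does not prove this lemma: it is stated without proof, introduced by ``we recall two well known results''. So there is no argument to compare against, only your proposal to assess.

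Your argument is correct and is the standard one. The easy direction (a lift forces triviality because $SU(2)$ is $2$-connected) is fine as written. For the converse, your identification of both conditions with the vanishing of the single class in $H^2(\Sigma;\Z/2\Z)\cong\Z/2\Z$ is exactly the right viewpoint: $SO(3)$-bundles over a closed surface are classified by $w_2$, and the obstruction to lifting $\rho$ through the central extension $1\to\Z/2\Z\to SU(2)\to SO(3)\to 1$ is a class in $H^2(\pi_1(\Sigma);\Z/2\Z)\cong H^2(\Sigma;\Z/2\Z)$ which coincides with $w_2(E_\rho)$. The concrete formula $\varepsilon(\rho)=\prod_i[\hat\alpha_i,\hat\beta_i]$ for its evaluation on $[\Sigma]$ is the mod~$2$ analogue of Milnor's Euler-number formula for flat bundles over surfaces, and the \v{C}ech computation you outline (locally constant transition functions prescribed by $\rho$, evaluated on the unique $2$-cell with attaching word $\prod_i[a_i,b_i]$) is precisely how one justifies it. In a write-up you could either carry this out or simply cite it; in the spirit of the paper --- which treats the lemma as folklore --- a citation would be entirely adequate. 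Note also that this formula is exactly what the paper exploits in the proof of Theorem~\ref{paperino}, where the choice $\widetilde A=\mathbf{i}$, $\widetilde B=\mathbf{j}$ gives $\varepsilon(\rho)=[\widetilde A,\widetilde B]=-\mathbf{1}$, witnessing nontriviality.
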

Finally, we can prove Theorem \ref{paperino}.
\begin{proof}[Proof of Theorem {\ref{paperino}}]
A trivial $S^2$-bundle over a surface always admits the product symplectic pair, so we only 
need to consider nontrivial bundles over positive genus surfaces.
 
Let $\Sigma$ be a closed surface of positive genus. Since the standard area form of $S^2$ is $SO(3)$-invariant, if $\rho \colon \pi_1(\Sigma) \to SO(3)$ is a representation, then the total space $M$ of the flat $S^2$-bundle over $\Sigma$ with holonomy $\rho$ carries a symplectic pair by the construction given in Section 3 of \cite{BK}.


It remains to find a representation inducing the nontrivial  $S^2$-bundles over $\Sigma$.
First, we choose two $\widetilde{A}, \widetilde{B} \in SU(2)$ such that 
$$\widetilde{A}  \widetilde{B} = - \widetilde{B} \widetilde{A}.$$
To find such elements, we can identify $SU(2)$ with the quaternions of norm one and take 
$\widetilde{A}= \mathbf{i}$ and $\widetilde{B}= \mathbf{j}$. We denote by $A$ and $B$
the images of $\widetilde{A}$ and $\widetilde{B}$ in $SO(3)$. Then $AB=BA$. 

We choose generators $a_1, b_1, \ldots, a_g, b_g$ of $\pi_1(\Sigma)$ satisfying the relation 
$$[a_1, b_1] \ldots [a_g, b_g]=1$$
 and define $\rho: \pi_1(\Sigma) \to SO(3)$ such that
$\rho(a_1)=A$, $\rho(b_1)=B$, and $\rho(a_i)= \rho(b_i)=I$ for $i= 2, \ldots, g$. By 
construction $\rho$ does not lift to a representation in $SU(2)$, and therefore the associated
flat $S^2$-bundle is nontrivial by Lemma \ref{triviality criterion}.
\end{proof}

\bibliographystyle{amsplain}

\end{document}